\address{School of Mathematical Sciences, Beijing Normal University,
Laboratory of Mathematics
 and Complex Systems,  Ministry of
  Education,
   Beijing 100875, The People's Republic
 of China.}\email{gclu@bnu.edu.cn}
\address{College of  Science, China University of Petroleum-Beijing , Beijing 102249,
 The People's Republic
 of China.} \email{xmchen@cup.edu.cn}
\newcommand{\R}{{\mathbb R}}
\newcommand{\Z}{{\mathbb Z}}
\newtheorem{theorem}{Theorem}[section]
\newtheorem{remark}[theorem]{Remark}
\newtheorem{lemma}[theorem]{Lemma}
\newtheorem{proposition}[theorem]{Proposition}
\newtheorem{claim}[theorem]{Claim}
\numberwithin{equation}{section}
\begin{document}

\title[Deformations of special Legendrian submanifolds]
{Deformations of special Legendrian submanifolds with boundary (corrected version)}

\author[Guangcun Lu]{Guangcun Lu}
\thanks{2010 {\it Mathematics Subject Classification.}
Primary~53C10,53D10, 53C25, 53C38\endgraf
Partially supported by the NNSF 10971014 and 11271044 of China, PCSIRT, RFDPHEC (No.
200800270003) and the Fundamental Research Funds for the Central Universities (No. 2012CXQT09).}

\author{Xiaomin Chen}

\begin{abstract}
This is a corrected version of our paper published in Osaka Journal of Mathematics
 51(2014), 673-693. We correct Theorem~1.1, Proposition~3.3 and their proofs.
\end{abstract}

\date{May 11, 2015}

\keywords{Contact Calabi-Yau manifolds, special Legendrian
submanifolds with boundary,
 scafford}

 \maketitle

 \noindent{\bf Preface} Georgios Dimitroglou Rizell, in his review MR3272612 in MathSciNet, pointed
out ``The main result Theorem 1.1 claims that such submanifolds constitute a discrete set, which
however seems to be incorrect. A counterexample is provided by Example~2.7 contained
in the same paper: in this case all structures, including $W$, are invariant under the
one-parameter Reeb flow. The image under the Reeb flow of the special Legendrian
submanifold in the same example is thus a one-parameter family of special Legendrian
submanifolds with boundary on $W$ satisfying the required properties. The mistake seems
to originate from Proposition~3.3, which is false." The aim
of this version is to correct  the original Theorem~1.1 and Proposition~\ref{prop:3.3}
and their proofs. We change completely the content of the original Remark~3.5 and delete
few sentences, for example, one below Claim~2.6 in the previous version.
We also correct few typo error and polish few sentences.

\section{Introduction and main results}

The calibrated geometry was invented by Harvey and Lawson in their
seminal paper \cite{HaLa}. A class of important calibrated
submanifolds is special Lagrangian submanifolds in Calabi-Yau
manifolds. Let $(M,J,\omega,\Omega)$ be a real $2n$-dimensional
Calabi-Yau manifold. A special Lagrangian submanifold of it is a
$n$-dimensional
submanifold $L$ with $\omega|_L=0$ and ${\rm Im}(\Omega)|_L=0$. In
1996 McLean \cite{McL} developed the deformation theory of special
Lagrangian submanifolds (and other special calibrated submanifolds)
and showed:\vspace{2mm}

\noindent{\bf McLean theorem} (\cite{McL}). {\it A normal vector
field $V$ to a compact special Lagrangian submanifold $L$ without
boundary in $(M,J,\omega,\Omega)$ is the deformation vector field to
a normal deformation through special Lagrangian submanifolds if and
only if the corresponding $1$-form $(JV)^\flat$ on $L$ is harmonic.
There are no obstructions to extending a first order deformation to
an actual deformation and the tangent space to such deformations can
be identified through the cohomology class of the harmonic form with
$H^1(L;\mathbb{R})$.}\vspace{2mm}

Since then  the theory is generalized to various situations. See
\cite{Joy, Joy1, Pa} and references therein. For example, S. Salur
\cite{Sa} generalized McLean theorem to symplectic manifolds. We
here only list those closely related to ours. The first one is the
case of compact special Lagrangian submanifolds with nonempty
boundary considered by Butsher \cite{Bu}. He called a submanifold
$L$ in the Calabi-Yau manifold $(M,J,\omega,\Omega)$ {\it minimal
Lagrangian} if $\omega|_L=0$ and ${\rm Im}(e^{i\theta}\Omega)|_L=0$
for some $\theta\in\mathbb{R}$. If $L$ is a Lagrangian submanifold
of $(M, \omega)$ with nonempty boundary $\partial L$ and
$N\in\Gamma(T_{\partial L}L)$ is the inward unit normal vector field
of $\partial L$ in $L$,  he  defined a {\it scaffold} for $L$ to be
a submanifold $W$ of $M$ such that $\partial L\subset W$, the bundle
$(TW)^\omega$ is trivial, and that $N$ is a smooth section of the
bundle $(T_{\partial L}W)^\omega$. \vspace{2mm}

\noindent{\bf Butsher theorem} (\cite{Bu}). {\it Let $L$ be a
special Lagrangian submanifold of a compact Calabi-Yau manifold $M$
with non-empty boundary $\partial L$ and let $W$ be a symplectic,
codimension two scaffold for $L$. Then the space of minimal
Lagrangian submanifolds sufficiently near $L$ (in a suitable
$C^{1,\beta}$ sense ) but with boundary on $W$ is finite dimensional
and is parametrized over the harmonic 1-forms of $L$ satisfying
Neumann boundary conditions.}\vspace{2mm}

The work inspired Kovalev and Lotay \cite{KoLo} to study the
analogous deformation problem of a compact coassociative $4$-fold
with boundary inside a particular fixed $6$-dimensional submanifold  with a compatible
Hermitian symplectic structure in a $7$-manifold with
closed $G_2$-structures. Recently Gayet and Witt \cite{GaWi} also
investigated the deformation of a compact associative submanifold
with boundary in a coassociative submanifold in a topological
$G_2$-manifold.

As a natural generalization of the Calabi-Yau manifolds in the
context of contact geometry  Tomassini and Vezzoni
\cite[Definition 3.1]{ToVe} introduced the notion of a contact Calabi-Yau
manifold, cf. Definition~2.1.  Let $(M,\eta, J,\epsilon)$ be a
$(2n+1)$-dimensional contact Calabi-Yau manifold, and
$j:L\hookrightarrow M$ be a compact special Legendrian submanifold
without boundary (cf. Definition~2.2). Two special Legendrian
submanifolds $j_0: L\hookrightarrow M$ and $j_1: L\hookrightarrow M$
are called {\it deformation equivalent} if there exists a smooth map
$F:
L\times[0,1]\to M$ such that\\
$\bullet$ $F(\cdot,t)\colon L\times \{t\}\to M$ is a special
Legendrian embedding
  for any $t\in[0,1]$;\\
$\bullet$ $F(\cdot,0)=j_0$, $F(\cdot,1)=j_1$.\\
(cf.\cite[Definition 4.4]{ToVe}). If there exists a diffeomorphism
$\phi\in{\rm Diff}(L)$ such that $j_1=j_0\circ\phi$ we say $j_0$ and
$j_1$ to be {\it equivalent}. This yields an equivalent relation
$\sim$ among all embeddings from $L$ to $M$. Let
$\widetilde{\mathfrak{M}}(L)$ be the set of special Legendrian
submanifolds of $(M,\alpha,J,\epsilon)$
  which are deformation equivalent to $j\colon L \hookrightarrow M$.
Call $\mathfrak{M}(L):=\widetilde{\mathfrak{M}}(L)/\sim$
 the {\it moduli space of special Legendrian  submanifolds} which are
deformation equivalent to $j: L\hookrightarrow M$.\vspace{2mm}
Tomassini and Vezzoni \cite[Theorem 4.5]{ToVe}  proved: \vspace{2mm}

\noindent{\bf Tomassini-Vezzoni theorem}(\cite[Theorem 4.5]{ToVe}).
\quad{\it Let $(M,\eta, J,\epsilon)$ be a contact Calabi-Yau
manifold of dimension $2n+1$, and $L\subset M$ be a compact
special Legendrian submanifold without boundary. Then the moduli
space $\mathfrak{M}(L)$ is a smooth one-dimensional manifold.}
\vspace{2mm}

Motivated by the above works,  we study in this paper the local
deformations of compact special Legendrian submanifolds with
(nonempty) boundary. (The boundary  is always assumed to be smooth
throughout this paper.) Different from the case $\partial
L=\emptyset$ considered by Tomassini and Vezzoni \cite{ToVe}, it is
showed in Remark~5.1 that the moduli space $\mathfrak{M}(L)$ is
infinite dimensional.

In order to get interesting results it is necessary to add some
boundary conditions. Inspired by  \cite[Definition 1]{Bu} we introduce a
notion of {\it scaffold} for $L$ in Definition~2.3, which is a
suitable contact submanifold $W$. Two special Legendrian
submanifolds $j_0: L\hookrightarrow M$ and $j_1: L\hookrightarrow M$
with $j_0(\partial L)\subset W$ and $j_1(\partial L)\subset W$ are
called {\it deformation equivalent} if there exists a smooth map $F:
L\times[0,1]\to M$ such that\\
$\bullet$ $F(\cdot,t)\colon L\times \{t\}\to M$ is a special
Legendrian embedding with $F(\partial L, t)\subset W$
  for any $t\in[0,1]$;\\
$\bullet$ $F(\cdot,0)=j_0$, $F(\cdot,1)=j_1$.\\

 The {\it moduli space of special Legendrian  submanifolds} which are
deformation equivalent to $j\colon L\hookrightarrow M$ with
$j(\partial L)\subset W$ is defined as
$$
\begin{array}{l}
\mathfrak{M}(L, W):=\bigl\{\mbox{\rm special
Legendrian submanifolds of}\,\, (M,\alpha,J,\epsilon)\\[5pt]
\hskip3truecm\mbox{\rm which are deformation equivalent to}\,\,
j\colon L \hookrightarrow M\\[5pt]
\hskip3truecm\hbox{with $j(\partial L)\subset W$ and are near
$j$}\bigr\}/\sim.
\end{array}
$$
Denote by $g$ the Riemannian metric $\frac{1}{2}d\alpha(\cdot,J\cdot)+\alpha\otimes\alpha$
on $M$, see (\ref{e:2.1}) for precise constructions. Let $N(L)$ be the normal bundle of $L$ with respect to $g$,
and let $\Gamma(N(L))_W$ be the set of all $V\in\Gamma(N(L))$
 that are the deformation vector fields to normal deformation through special Lagrangian submanifolds
with boundary confined in $W$.
 Our first result is

\begin{theorem}\label{th:1.1}
Let $(M,J,\alpha,\epsilon)$ be a contact Calabi-Yau manifold, and
$L$ be a connected compact special Legendrian submanifold with nonempty
boundary $\partial L$ inside a  scaffold $W$ of codimension two.
Then the following statements hold:\\
{\rm (i)} $\mathfrak{M}(L, W)$ has at most dimension $\dim H^1(L;\mathbb{R})+1$ near $L$;
moreover $\Gamma(N(L))_W$ is a vector space of dimension at most
 $\dim H^1(L;\mathbb{R})+1$.\\
{\rm (ii)}  $\{V\in\Gamma(N(L))_W\,|\,d\bigl(\alpha({V})|_{\partial L}\bigr)=0\}$
 has dimension at most $l$, and one if $\partial L$ is connected,
 where $l$ is the number of connected components of $\partial L$;
 moreover $\dim\{V\in\Gamma(N(L))_W\,|\,\alpha({V})|_{\partial L}={\rm const}\}=1$. \\
{\rm (iii)} Any  vector field $V\in\Gamma(T_LM)\setminus\Gamma(TL)$  with  $\alpha(V)|_{\partial L}=0$
cannot be the deformation vector field to a deformation through special Lagrangian submanifolds
with boundary confined in $W$.
 \end{theorem}


(i) is similar to the above Butsher theorem.
In (ii), the second statement and the first one in case $l=1$ are
  similar to the case of compact special Legendrian
submanifolds  without boundary considered in Tomassini-Vezzoni
Theorem above.  The local rigidity in (iii) is similar to the case of a compact simply connected
 special Lagrangian submanifold  without boundary in  McLean Theorem, and Simons'
rigidity result of stable minimal submanifolds with fixed boundary in \cite{Sim}.

Now we turn to consider weaker boundary conditions. Let $(M,\alpha,
J,\epsilon)$ be a $(2n+1)$-dimensional contact Calabi-Yau manifold,
and $L\subset M$ be a compact special Legendrian submanifold with
(non-empty) boundary. A normal vector field $V$ to $L$ is called
{\it boundary $\alpha$-constant} if $\alpha(V)|_{\partial L}$ is
constant.
 The following result, which is stated in a similar way to McLean
 Theorem above,  is similar to that of Tomassini and Vezzoni
 \cite{ToVe}.

\begin{theorem}\label{th:1.2}
Let $(M,\alpha, J,\epsilon)$ be a $(2n+1)$-dimensional contact
Calabi-Yau manifold, and and $L\subset M$ be a compact special
Legendrian submanifold with (non-empty) boundary. A boundary
$\alpha$-constant normal vector field $V$ to $L$ is the deformation
vector field to a normal deformation through special Legendrian
submanifolds if and only if $\alpha(V)$ is constant. Moreover the
tangent space to such deformations is given by $\mathbb{R}R_\alpha$,
where $R_\alpha$ is the Reeb vector field of $\alpha$.
\end{theorem}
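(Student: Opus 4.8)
The plan is to reduce the whole problem to a scalar elliptic equation for the single function $f=\alpha(V)$ on $L$, in the same spirit in which McLean reduces the special Lagrangian problem to a first--order system for a $1$--form. First I would describe the normal bundle of the Legendrian $L$. Writing $\mathcal{H}=\ker\alpha$ for the contact distribution, $R$ for the Reeb field, and $g=d\alpha(\cdot,J\cdot)$ for the associated metric, the fact that $TL$ is Lagrangian in $(\mathcal{H},d\alpha)$ gives the splitting $T_LM=TL\oplus J(TL)\oplus\langle R\rangle$, so that $NL\cong J(TL)\oplus\langle R\rangle\cong T^*L\oplus\mathbb{R}$. I decompose a normal field as $V=JW+fR$ with $W\in\Gamma(TL)$, $f=\alpha(V)$, and set $\beta=W^\flat\in\Omega^1(L)$. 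The Legendrian condition for a deforming family is $\alpha|_{L_t}=0$, whose linearization at $t=0$ is
\[
(\mathcal{L}_V\alpha)|_L=(d\,\iota_V\alpha+\iota_V d\alpha)|_L=df-W^\flat,
\]
where I use $\iota_Rd\alpha=0$ and evaluate $d\alpha(JW,\cdot)$ against $g$. Hence a first--order Legendrian deformation forces $W^\flat=df$, i.e. $W=\nabla f$ and $V=fR+J\nabla f$, so that Legendrian deformations are parametrised precisely by the scalar $f=\alpha(V)$.

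Next I would linearize the special condition $\mathrm{Im}(\epsilon)|_{L_t}=0$. Since $L$ is special Legendrian, $\epsilon|_L=\mathrm{vol}_L$ is real and positive, and in a contact Calabi--Yau manifold $\epsilon$ is horizontal ($\iota_R\epsilon=0$) and closed ($d\epsilon=0$), so $\mathcal{L}_V\epsilon=d(\iota_V\epsilon)$. Using that $\epsilon$ is of type $(n,0)$ on $\mathcal{H}$, so that $\iota_{JX}\epsilon=i\,\iota_X\epsilon$, together with $W=\nabla f\in TL$ and the Riemannian identity $\iota_W\mathrm{vol}_L=\ast_L W^\flat$, I obtain
\[
(\iota_V\epsilon)|_L=(\iota_{JW}\epsilon)|_L=i\,(\iota_W\epsilon)|_L=i\,\ast_L df .
\]
Because pullback commutes with $d$, restricting $\mathcal{L}_V\epsilon$ to $L$ gives $(\mathcal{L}_V\epsilon)|_L=i\,d\ast_L df$, whence $\mathrm{Im}(\mathcal{L}_V\epsilon)|_L=d\ast_L df$. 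Thus $V=fR+J\nabla f$ is a first--order special Legendrian deformation if and only if $d\ast_L df=0$, i.e. $\Delta f=0$.

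Finally I would impose the boundary hypothesis and settle integrability and the dimension count. Boundary $\alpha$--constancy of $V$ means exactly that $f|_{\partial L}$ is constant, so $f$ solves $\Delta f=0$ with constant Dirichlet data; the maximum principle (equivalently, integrating $|\nabla f|^2$ by parts and using $f|_{\partial L}=\mathrm{const}$ together with $\int_{\partial L}\partial_\nu f=\int_L\Delta f=0$) forces $f\equiv\mathrm{const}$. Conversely, if $f\equiv c$ then $\nabla f=0$ and $V=cR$, and the Reeb flow $\phi_t$ provides an honest family: from $\mathcal{L}_R\alpha=d(1)+\iota_Rd\alpha=0$ and $\mathcal{L}_R\epsilon=d\iota_R\epsilon+\iota_Rd\epsilon=0$ the submanifolds $\phi_{ct}(L)$ remain special Legendrian, so this first--order deformation is unobstructed and the tangent space to such deformations is $\{cR:c\in\mathbb{R}\}\cong\mathbb{R}$. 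I expect the main obstacle to be the two linearization computations --- above all the identification of $(\mathcal{L}_V\epsilon)|_L$ with $i\,d\ast_L df$, which requires the closedness and horizontality of $\epsilon$, the $(n,0)$--type relation $\iota_{JX}\epsilon=i\,\iota_X\epsilon$, and the calibration identity $\epsilon|_L=\mathrm{vol}_L$; once these are in place, the reduction to a scalar harmonic equation with constant boundary value makes the conclusion immediate.
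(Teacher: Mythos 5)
Your proof is correct, and the first half --- the linearization --- is essentially the computation the paper performs: writing $V=fR+JW$, using $\iota_R d\alpha=0$ and the identity $p^\ast(\iota_Y\mathrm{Im}\,\epsilon)=-\star\,p^\ast(\iota_Y\kappa)$ (the paper's (2.2), which you rederive from the $(n,0)$-type of $\epsilon$ and $\epsilon|_L=\mathrm{vol}_L$), both arguments reduce the linearized special Legendrian equations to $df+\iota_{JW}d\alpha|_L=0$ and $d\star df=0$, and then kill $f$ by the maximum principle with constant Dirichlet data. Where you genuinely diverge is in the integrability and dimension count. The paper sets up $G(V)=(\exp_V^\ast\alpha,\,2\exp_V^\ast\mathrm{Im}\,\epsilon)$ on a Banach neighborhood, proves surjectivity of $G'(0)$ onto $C^{1,a}\Omega^1(L)\oplus dC^{1,a}\Omega^{n-1}(L)$ via the Hodge--Morrey decomposition, computes $\ker G'(0)=\{cR_\alpha\}$, and invokes the implicit function theorem; you instead observe that $\mathcal{L}_R\alpha=0$ and $\mathcal{L}_R\epsilon=d\iota_R\epsilon+\iota_Rd\epsilon=0$, so the Reeb flow explicitly integrates every first-order deformation $cR$. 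Your route is more elementary and exhibits the deformations concretely, which suffices for the theorem as literally phrased (deformation vector field iff $\alpha(V)$ is constant, tangent space $\cong\mathbb{R}$). What the paper's heavier route buys is the converse inclusion at the level of the moduli space: surjectivity of $G'(0)$ plus the implicit function theorem shows that \emph{every} special Legendrian submanifold near $L$ satisfying the boundary condition lies on the one-dimensional family, i.e.\ that $\mathfrak{M}(L)$ is a smooth one-dimensional manifold, whereas your argument alone identifies the formal tangent space and shows it is unobstructed but does not rule out nearby special Legendrians not arising from the Reeb flow. If you want the full moduli-space statement you would still need the surjectivity step.
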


 Similar to the case $L$ being compact and without boundary considered in Theorem~4.5
 of \cite{ToVe}  the deformation in Theorem~\ref{th:1.2} is also given
by the isometries generated by the Reeb vector field, which is
completely different from the  deformation without boundary
constraints as proved in Remark~5.1.

 The key points in the proofs of
Theorems~\ref{th:1.1} and \ref{th:1.2} are to find a suitable
definition of scaffold for a special Legendrian submanifold with
boundary and to prove a corresponding result  with Lemma 5 of
\cite{Bu}, Lemma~\ref{lm:3.1}. For the former we propose and study
it in Section 2. The proof of the latter will be given in Section 3
and  is more troublesome because we need to use not only contact
neighborhood theorem but also symplectic neighborhood theorem. In
Sections 4 and 5, we complete the proofs of Theorems~\ref{th:1.1}
and \ref{th:1.2}  respectively.\vspace{2mm}

\section{Preliminaries}\label{sec:2}

\subsection{Contact Calabi-Yau manifolds and special Legendrian
submanifolds}
 Let $(M,\alpha)$ be a contact manifold with contact distribution
 $\xi=\ker\alpha$ and  Reeb vector field  $R_\alpha$.
Then $\kappa:=d\alpha/2$ restricts to a symplectic vector bundle
structure on $\xi\to M$, $\kappa|_\xi$, and
 every compatible complex structure $J\in{\mathcal
J}(\xi, \kappa|_{\xi})$ gives a Riemannian metric $g_J$ on the
bundle $\xi\to M$, $g_J(u,v)=\kappa(u,Jv)$ for $u,v\in\xi$. By
setting $J(R_\alpha)=0$ we can extend $J$ to an endomorphism of
$TM$,  {\it  also denoted by} $J$  without special statements.
Clearly
\begin{equation}\label{e:2.1}
J^2=-\textbf{I}+ \alpha\otimes R_\alpha, \quad\hbox{and}\quad g :=
g_J + \alpha\otimes\alpha
\end{equation}
is a Riemannian metric $g$ on $M$, where \textbf{I} is the identity
endomorphism on $TM$. Define a Nijenhuis tensor of $J$ by
$$
N_{J}(X,Y)=[J X,J Y]-J[X,J Y]-J[J X, Y] + J^2[X,Y]
$$
for all $X,Y\in TM$. If $N_{J}
  =- d\alpha\otimes R_{\alpha}$ then the pair $(\alpha, J)$ is a
{\it Sasakian structure} on $M$, and the triple $(M,\alpha, J)$ is
called a  {\it Sasakian manifold}. On such a manifold it holds that
$d\Lambda^r_B(M)\subset\Lambda^r_B(M)$ and $J\bigl(
\Lambda^r_B(M)\bigr)=\Lambda^r_B(M)$, where $\Lambda_B^r(M)$ is the
set of all differential $r$-form $\gamma$ on $M$ with $
\iota_{R_\alpha}\gamma=0$ and $\mathcal{L}_{R_\alpha}\gamma=0$. So
we have a split
$$
\Lambda^{r}_B(M)\otimes\mathbb{C}=\oplus_{p+q=r}\Lambda^{p,q}_J(\xi)
$$
and $\kappa=(1/2)d\alpha\in \Lambda^{1,1}_J(\xi)$.\vspace{2mm}

\noindent{\rm Definition 2.1}(\cite[Definition 2.1]{ToVe}).\quad  A {\it
contact Calabi-Yau manifold} is a quadruple $(M,\alpha, J,\epsilon)$
consisting of a $2n+1$-dimensional Sasakian manifold $(M,\alpha, J)$
and a nowhere vanishing basic form
 $\epsilon\in \Lambda_{J}^{n,0}({\xi})$  such that
$$
    \epsilon\wedge \bar{\epsilon}=c_{n}\frac{\kappa^{n}}{n!}
   $$
and
$$
d\epsilon=0,
$$
where $c_{n}=(-1)^{\frac{n(n+1)}{2}}(2i)^{n}$ and
$\kappa=(1/2)d\alpha$.\vspace{2mm}

\noindent{\rm Definition 2.2} (\cite[Definition 4.2]{ToVe}).\quad  Let
$(M^{2n+1},\alpha, J,\epsilon)$ be a contact Calabi-Yau manifold. An
embedding $p:L\to M$ is called a {\it special Legendrian
submanifold} if $\dim L=n$, $p^{*}\alpha=0$ and $p^{*}{\rm
Im}\epsilon=0$.\vspace{2mm}

Clearly, $p^\ast\epsilon=p^\ast({\rm Re}\epsilon)$ is a volume form
on $L$. Thus every special Legendrian submanifold has a natural
orientation. By \cite[p.722]{McL} or \cite[Proposition 2.6]{DeB} we have
\begin{equation}\label{e:2.2}
p^\ast\left(\iota_Y{\rm Im}\epsilon\right)=-\star\left(
p^\ast(\iota_Y\kappa)\right)=-\frac{1}{2}\star\left(
p^\ast(\iota_Yd\alpha)\right)
\end{equation}
for any section $Y:L\to p^\ast\xi$, where the star operator $\star$
is computed with respect to $p^*(g_{J})=p^\ast(\kappa\circ({\rm
id}\times J))$ and the volume form ${\rm
Vol}(L):=p^\ast\epsilon=p^*({\rm Re}\epsilon)$.

For any $n$-dimensional manifold $N$,  the cotangent bundle $T^\ast
N$ has a canonical $1$-form $\lambda_{\rm can}$. The $1$-jet bundle
$J^1N=\mathbb{R}\times T^\ast N$ is a contact manifold with contact
form $\alpha=\pi_1^\ast(dt)-\pi_2^\ast(\lambda_{\rm can})$ and Reeb
vector field $\partial/\partial t$, where $t\in\mathbb{R}$ is the
real parameter and $\pi_i$ is the projection from $\mathbb{R}\times
T^\ast N$ onto the $i$-th factor, $i=1,2$. (See \cite[Example
3.44]{McSa}).

\subsection{Boundary conditions}

Corresponding to  \cite[Def. 1]{Bu} we introduce:\vspace{2mm}

\noindent{\rm Definition 2.3.}\quad  Let $L$ be a submanifold of the
contact manifold $(M,\xi=\ker\alpha)$ with boundary $\partial L$ and
let $N\in \Gamma(T_{\partial L}L)$ be the inward unit normal vector
field of $\partial L$ in $L$.  A contact submanifold $(W,\xi')$ of
$(M,\xi)$ is called a {\it scaffold} for $L$ if
\begin{enumerate}
\item[(i)] $\partial
L\subset W$,
\item[(ii)]  $N\in\Gamma(\xi'^{\bot}|_{\partial L})$, and
\item[(iii)]
       the bundle $\xi'^\perp$ is trivial, where $\xi'^\bot$ is
       the symplectically orthogonal complement of
$\xi'$ in $(\xi|_W, \kappa|_{\xi|_W})$.
\end{enumerate}
\vspace{2mm}

Given a contact manifold $(M,\alpha)$ let $J$ and $g$ be as in
(\ref{e:2.1}).   If $(W, \xi')$ is a contact submanifold of $(M,
\xi=\ker\alpha)$, that is, $T_xW\cap\xi_x=\xi_x'$ for all $x\in W$,
the following claim shows that the condition (iii) of Definition~2.3
is equivalent to one that $(TW)^{\bot_g}$ is trivial, where
$(TW)^{\bot_g}$ denotes the orthogonal complementary bundle of $TW$
in $T_WM$ with respect to the
 metric $g$.
\vspace{2mm}

\noindent{\bf Claim 2.4.}\quad {\it
$(TW)^{\bot_g}=(J\xi')^\bot=J(\xi'^\bot)$.}\vspace{2mm}

\noindent{\rm Proof}.\quad For $x\in W$, since
$\xi_x'^\bot\subset\xi_x$ and $J_x$ restricts to a complex structure
on $\xi_x$ we have
\begin{eqnarray*}
\xi_x'^\bot&=&\{v\in\xi_x\,|\,\kappa(v,u)=0\;\forall
u\in\xi_x'\}\\
&=&\{v\in\xi_x\,|\,\kappa(Jv,Ju)=0\;\forall
u\in\xi_x'\}\\
&=&\{v\in\xi_x\,|\, g_J(Jv,u)=0\;\forall
u\in\xi_x'\}\\
&=&\{v\in\xi_x\,|\, g(Jv, bR_\alpha+ u)=0\;\forall\;
bR_\alpha+ u\in R_\alpha\mathbb{R}+\xi_x'\}\\
&=&\{v\in\xi_x\,|\, g(Jv, Y)=0\;\forall Y\in T_xW\}.
\end{eqnarray*}
This implies $J\xi'^\bot=(TW)^{\bot_g}$ or
$\xi'^\bot=J(TW)^{\bot_g}$. Moreover, both $J\xi'^\bot$ and
$\xi'^\bot$ are contained in $\xi|_W$, and $\xi$ is $J$-invariant.
It is easy to check that $J\xi'^\bot=(J\xi')^\bot$.
\hfill$\Box$\vspace{2mm}

\vspace{2mm} \noindent{\bf Proposition~2.5.}\quad{\it  Let $L$ be a
Legendrian submanifold of the contact manifold $(M,\xi=\ker\alpha)$
with (nonempty) boundary $\partial L$ and let $W$ be a scaffold for
$L$. Then $\partial L$ is a Legendrian submanifold of $(W, \xi')$.}
\vspace{2mm}

\noindent{\rm Proof}.\quad  Since $L$ is the Legendrian submanifold
of $(M,\xi)$, $TL\subset\xi|_L$. Moreover the definition of the
scaffold implies that $T\partial L\subset T_{\partial L}W$ and thus
$T\partial L\subset T_{\partial L}W\cap\xi|_{\partial
L}=\xi'|_{\partial L}$. This shows that  the boundary $\partial L$
is a Legendrian submanifold of $(W,\xi')$.\hfill$\Box$\vspace{2mm}

Under the assumptions of Proposition~2.5, let $f_t:L\to M$ be a
deformation of $L$ satisfying $f_t(\partial L)\subset W$ for all
$t$, and let $V=\frac{d}{dt}f_t|_{t=0}$ be the corresponding
deformation vector field. Clearly, $V(x)\in T_xW$ for any
$x\in\partial L$. Since $L$ is a Legendrian submanifold, we have
$TL\subset\xi|_L$.
 Note that  $N(x)\in T_xL$ for any $x\in\partial
L$. Then the condition (ii) of Definition~2.3 implies that
$N(x)\in\xi_x'^\bot$, and so ${N}(x)\in T_xL\cap\xi'^\bot_x$ and
$$
J_xN(x)\in J_x(T_xL\cap\xi'^\bot_x)\subset J_x\xi'^\bot_x\subset
J_x\xi_x=\xi_x.
$$
Since $W$ is a contact submanifold, we may write $V(x)=Y+
aR_\alpha(x)$, where $Y\in\xi_x'$. By Claim~2.4, $J_x{N}(x)\in
J_x\xi'^\bot_x=(T_xW)^{\bot_g}$ and thus
$$
0=g(J_x{N}(x), V(x))=g_J(J_x{N}(x), Y)=\kappa(J_x{N}(x),
J_xY)=\kappa({N}(x), Y).
$$
Note that $Y=V(x)-\alpha(V(x))R_\alpha(x)$ and that
$\iota_{R_\alpha}d\alpha=0$. We get\vspace{2mm}

\noindent{\bf Claim~2.6.}\quad{\it If $f_t:L\to M$ be a deformation
of $L$ satisfying $f_t(\partial L)\subset W$ for all $t$, then the
corresponding deformation vector
field $V$ satisfies {\rm Neumann boundary condition:} $d\alpha({N}(x),
V(x))=0\;\forall x\in\partial L$.}\vspace{2mm}


\noindent{\bf Eexample~2.7.}\quad It is not hard to construct an
example satisfying the boundary conditions of Theorems~1.1 and 1.2.
Let $(x_1,\cdots,x_n,y_1,\cdots,y_n, z)$ denote the standard
Euclidean coordinate in $\R^{2n+1}$.  The standard contact
Calabi-Yau structure $(\alpha, J, \epsilon)$ on $\R^{2n+1}$ is given
by
$$
\alpha=2dz-2\sum^n_{j=1}y_jdx_j,\quad \epsilon=(dx_1+
idy_1)\wedge\cdots\wedge(dx_n+ idy_n)
$$
and
$$
 J:\xi={\rm Ker}(\alpha)={\rm
Span}(\{y_1\partial_z+\partial_{x_1},\cdots, y_n\partial_z+
\partial_{x_n}, \partial_{y_1},\cdots,\partial_{y_n}\})\to\xi
$$
where $J$ is given by $JX_r=Y_r=\partial_{y_r}$ and
$JY_r=-X_r=-y_r\partial_z-\partial_{x_r}$, $r=1,\cdots,n$. (See
\cite[Example 3.2]{ToVe}). Observe that this structure is invariant under
the action of the subgroup $\Z^n\times\{0\}^{n+1}$ of $\Z^{2n+1}$.
It descends to such a structure on
$M=\R^{2n+1}/(\Z^n\times\{0\}^{n+1})= {\R^n/\Z^n}\times\R^{n+1}$,
also denoted by $(\alpha, J, \epsilon)$ without occurs of
confusions. As usual we write the point of $M$ as
$([x_1],\cdots,[x_n],y_1,\cdots,y_n,z)$. Let $n\ge 2$. Consider the
contact submanifold of $(M, \alpha)$, $W=W_{0}\cup W_{1}$,
$$
W_{k}=\left\{([x_1],\cdots,[x_n],y_1,\cdots,y_{n-1},0, z)\in M\,\biggm|\,
x_n=\frac{k+1}{3}\right\},\quad k=0, 1.
$$
Since the contact form on it is
$\alpha'=\alpha|_W=2dz-2\sum^{n-1}_{j=1}y_jdx_j$, it is easy to see
that  the symplectically orthogonal complementary bundle $\xi'^\bot$ of
$\xi'={\rm Ker}(\alpha')$ in $(\xi|_W, \kappa|_{\xi|_W})$ is
trivial. In fact, we have
\begin{eqnarray*}
&&\xi'={\rm Span}(\{y_1\partial_z+\partial_{x_1},\cdots,
y_{n-1}\partial_z+
\partial_{x_{n-1}}, \partial_{y_1},\cdots,\partial_{y_{n-1}}\}),\\
&&\xi'^\bot={\rm Span}(\{y_n\partial_z+
\partial_{x_n}, \partial_{y_n}\}).
\end{eqnarray*}
Consider $L=\{([x_1],\cdots,[x_n],0,\cdots,0)\in M\,|\, 1/3\le
x_n\le 2/3\}$. It is a compact Legendrian submanifold with boundary
$\partial L=\partial_0L\cup\partial_1L$, where
$$
\partial_k L=\{([x_1],\cdots,[x_n], 0,\cdots,0)\in
M\,|\, x_n=(k+1)/3\},\quad k=0,1.
$$
Clearly, $\partial_kL\subset W_k$, $k=0,1$, and thus $\partial
L\subset W$. By (\ref{e:2.1}) the metric $g=g_J+\alpha\otimes\alpha$
satisfies: $g(R_\alpha, R_\alpha)=1$, $g(X_r, X_s)=g(Y_r,
Y_s)=\delta_{rs}$ and $g(X_r, Y_s)=g(X_r, R_\alpha)=g(Y_r,
R_\alpha)=0$ for $r,s=1,\cdots,n$. For $p=([x_1],\cdots,[x_n],
0,\cdots,0)\in\partial_0L$ we have
$$
T_pL={\rm Span}(\{\partial_{x_1}|_p,\cdots,
\partial_{x_n}|_p\}),\quad T_p\partial_0L={\rm
Span}(\{\partial_{x_1}|_p,\cdots, \partial_{x_{n-1}}|_p\}).
$$
Since $X_j|_p=\partial_{x_j}|_p$, $j=1,\cdots,n$, it follows that
$X_n|_p$ is the inward unit normal vector at $p$ of $\partial L$ in
$L$. Similarly,  for $p=([x_1],\cdots,[x_n],
0,\cdots,0)\in\partial_1L$ the inward unit normal vector at $p$ of
$\partial L$ in $L$ is $-X_n|_p$. Namely the inward unit normal
vector field $N$ of $\partial L$ in $L$ belongs to
$\Gamma(\xi'^\bot|_{\partial L})$. Hence $W$ is a scaffold for $L$.
\vspace{2mm}

\section{Constructing a  new metric}

 In the study of the deformation of the
special Legendrian submanifold $L$ without boundary by  Tomassini
and Vezzoni \cite{ToVe}, the deformations of $L$ are parameterized
by
 sections of the normal bundle $N(L)$ using the exponent map $\exp(V):L\to
M$. However, in our case, since $W$ is generally not totally
geodesic,  it cannot be assured that the image of $\partial L$ under
$\exp(V)$ sits in $W$.  In order to fix out the problem we shall
follow the ideas in \cite{Bu} to construct a new metric $\hat{g}$
such that the image of $\partial L$ under the corresponding exponent
map is contained in $W$, that is, such that $W$ is totally geodesic
near $\partial L$. The following  is an analogue
 of \cite[Lemma 5]{Bu}.

\begin{lemma}\label{lm:3.1}
Let $L$ be a compact Legendrian submanifold of the contact manifold
$(M,J,\alpha)$ with (nonempty) boundary $\partial L$ and let $W$ be
a scaffold for it of codimension two. Then there is a neighborhood
${\mathcal U}={\mathcal U}(\partial L, M)$ of $\partial L$ in $M$
and a contact embedding
 $\phi:{\mathcal U}\to \mathbb{R}\times
T^{*}(\partial L)\times\mathbb{R}^2$ such that the following
conditions hold:
\begin{enumerate}
\item[(i)] $\phi\bigr(W\cap{\mathcal U}\bigl)\subset \mathbb{R}\times T^{*}(\partial
L)\times\{(0,0)\}$,
\item[(ii)] $\phi(\partial L)=\{0\}\times\partial
L\times\{0,0\}$,

\item[(iii)] $(t, x, v, s_1, s_2)\in\phi({\mathcal U})\Rightarrow
(t, x, v, 0, 0)\in\phi({\mathcal U})$,

\item[(iv)] for any nowhere zero smooth section $V: W\to\xi'^\bot|_{W}$,
  $\phi$ can be required to satisfy
  $\phi_*(V(p))=\frac{\partial}{\partial s_1}\Bigm|_{\phi(p)}$ for any $p\in\partial L$,
  where $(s_1,s_2)$ are the coordinate
functions of $\mathbb{R}^2$.
\end{enumerate}
\end{lemma}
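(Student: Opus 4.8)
The target $\mathbb{R}\times T^*(\partial L)\times\mathbb{R}^2$ should be read as the standard local model of a codimension-two contact submanifold. Writing $(s_1,s_2)$ for the $\mathbb{R}^2$-coordinates and regarding $\mathbb{R}\times T^*(\partial L)\times\mathbb{R}^2=\mathbb{R}\times T^*(\partial L\times\mathbb{R})=J^1(\partial L\times\mathbb{R})$, the model carries the contact form $\alpha_0=dt-\lambda_{\rm can}^{\partial L}-s_2\,ds_1$ of the preliminaries, for which $\{s_1=s_2=0\}$ is exactly $J^1(\partial L)$ as a contact submanifold and $\mathrm{span}(\partial_{s_1},\partial_{s_2})$ is a symplectic normal to it. The plan is to construct $\phi$ matching this picture in two stages: first a contactomorphism \emph{inside} $W$ onto a neighborhood of the zero section of $J^1(\partial L)$, then a thickening across the two trivial normal directions $\xi'^\bot$ to a contact embedding of a full neighborhood of $\partial L$ in $M$.

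For the first stage, recall that by Proposition~\ref{prop:2.5} the boundary $\partial L$ is a compact Legendrian submanifold of $(W,\xi')$. I would invoke the contact Weinstein (Legendrian) neighborhood theorem inside $W$: there is a neighborhood $\mathcal{V}$ of $\partial L$ in $W$ and a contactomorphism $\psi:\mathcal{V}\to J^1(\partial L)=\mathbb{R}\times T^*(\partial L)$ carrying the contact structure to that of the jet model, $\partial L$ to the zero section $\{0\}\times 0_{\partial L}$ at $t=0$, and (in its strict form) $R_\alpha$ to $\partial_t$. This already yields the $W$-part of conclusions (i) and (ii), namely that $W\cap\mathcal{U}$ will land in $\{s_1=s_2=0\}$ and $\partial L$ in $\{0\}\times\partial L\times\{(0,0)\}$.

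For the second stage I would extend $\psi$ across the normal directions of $W$ in $M$ by the contact neighborhood theorem for contact submanifolds, whose only invariant is the conformal symplectic normal bundle. By Definition~\ref{def:2.3}(iii) the symplectic orthogonal complement $\xi'^\bot$ — a rank-two symplectic subbundle of $(\xi|_W,\kappa)$ — is trivial. Given the prescribed nowhere-zero section $V$, I would complete it to a frame $(V,V')$ of $\xi'^\bot$ with $\kappa(V,V')>0$ (in a rank-two symplectic fibre such a $V'$ exists and is unique modulo multiples of $V$, hence can be chosen smoothly over the compact $\partial L$). Composing this trivialization with $\psi$ and with an identification of a disc-subbundle of $\xi'^\bot$ with a tubular neighborhood of $W$ in $M$ produces $\phi$ on a neighborhood $\mathcal{U}$ of $\partial L$, sending $V\mapsto\partial_{s_1}$ and $V'\mapsto\partial_{s_2}$ along $W$; this gives (iv) together with (i) in full. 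Shrinking $\mathcal{U}$ to the preimage of a product $\mathcal{O}\times D$, with $\mathcal{O}\subset J^1(\partial L)$ open and $D\subset\mathbb{R}^2$ a disc about the origin, makes the slice-saturation condition (iii) automatic.

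The troublesome point, and the reason both a contact and a symplectic neighborhood theorem are required, is precisely this second stage: the contact submanifold model pins down $\xi'^\bot$ only up to a conformal symplectic class (the contact form is recovered up to a positive factor), so one must reconcile the exact jet-model form $\alpha_0$ with $\alpha$ while simultaneously keeping the frame symplectic for $\kappa$ and still sending $V$ to $\partial_{s_1}$. Concretely, I expect to fix the symplectic trivialization adapted to $V$ first, then feed it into a Moser-type normal-form argument in the $(s_1,s_2)$-directions (the symplectic neighborhood step) to turn the contactomorphism extending $\psi$ into one respecting $\alpha_0$ up to the unavoidable conformal factor, absorbing that factor by rescaling $V'$ and $\alpha$ near $\partial L$. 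Matching the Reeb direction with $\partial_t$ and keeping $W$ inside $\{s_1=s_2=0\}$ throughout this isotopy is the main bookkeeping obstacle; compactness of $\partial L$ ensures all neighborhoods and the frame can be chosen uniformly.
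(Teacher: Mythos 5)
Your proposal follows essentially the same route as the paper: first the Legendrian neighborhood theorem applied to $\partial L\subset(W,\xi')$ (via Proposition~\ref{prop:2.5}), then the contact neighborhood theorem for the codimension-two contact submanifold $W$ using the triviality of $\xi'^\bot$, and finally — since that theorem controls the normal derivative only up to bundle homotopy and a conformal factor — a symplectic neighborhood (Moser) correction in the $(s_1,s_2)$-directions, crossed with the identity in $t$, to force $\phi_*(V)=\partial/\partial s_1$ exactly, with (iii) obtained by shrinking to a product neighborhood. This matches the paper's two-step construction $\phi=\phi_2\circ\phi_1$, including its identification of the conformality issue as the reason both neighborhood theorems are needed.
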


 Note that the condition (iv) is slightly weaker than the
corresponding one of \cite[Lemma 5(4)]{Bu}. It is sufficient for us
to construct a suitable metric in Proposition~\ref{prop:3.2}. Even
so our proof uses not only contact neighborhood theorem but also
symplectic neighborhood theorem  in contrast with the proof of
\cite[Lemma 5(4)]{Bu}. It is a key of our proof.\vspace{2mm}

\noindent{\bf Proof of Lemma~3.1}.\quad Since $\partial L$ is a
compact Legendrian submanifold of $\mathbb{R}\times T^{*}(\partial
L)$ without boundary, from the Neighborhood Theorem for Legendrian
(cf. Corollary~2.5.9 in \cite{Ge}) it follows that there exists a
contactomorphism $\phi_0$ from a neighborhood ${\mathcal
U}_0(\partial L, W)$ of $\partial L$ in $W$ to one ${\mathcal
V}_0(0_{\partial L})$ of the zero section of $T^{*}(\partial L)$ in
$\mathbb{R}\times T^{*}(\partial L)$ such that
\begin{equation}\label{e:3.1}
\phi_0(x)=(0, x)\quad\forall x\in\partial L.
\end{equation}

Fix a Riemannian metric on the bundle $T^{*}(\partial L)$, and then
take a sufficiently small $\epsilon>0$ such that
\begin{equation}\label{e:3.2}
M_1':=\{(t,x, v)\,:\, |t|\le\epsilon,\;v\in T_x^{*}(\partial
L)\;\hbox{with}\;|v|\le\epsilon\,\}\subset {\mathcal
V}_0(0_{\partial L}).
\end{equation}
We get another neighborhood of $\partial L$ in $W$,
\begin{equation}\label{e:3.3}
M_0':=\phi_0^{-1}(M_1')\subset {\mathcal U}_0(\partial L, W)\subset
W.
\end{equation}
Then $\phi_0:M_0'\to M_1'$ is a contactomorphism. Obverse that
$M_0'$ and $M_1'$ are compact contact submanifolds of $W$ and
$T^{*}(\partial L)\times\mathbb{R}$ with boundary and of codimension
zero, respectively.

Let $\lambda_{\rm can}$ denote the canonical $1$-form on
$T^\ast\partial L$. Recall that the contact form and Reeb vector
field on $J^1{\partial L}=\mathbb{R}\times T^{*}(\partial L)$ are
\begin{equation}\label{e:3.4}
\tilde\beta=dt-\lambda_{\rm can}\qquad\hbox{and}\qquad
R_{\tilde\beta}=\frac{\partial}{\partial t}.
\end{equation}
 Assume that $s_1,s_2$ are the coordinate functions of
$\mathbb{R}^2$. We have a contact form on $J^1{\partial
L}\times\mathbb{R}^2=\mathbb{R}\times T^{*}(\partial
L)\times\mathbb{R}^2$,
\begin{equation}\label{e:3.5}
\beta=\tilde\beta-s_1ds_2=dt-\lambda_{\rm can}-s_1ds_2,
\end{equation}
whose Reeb vector field is given by $R_\beta=\partial/\partial t$.
Denote by $(\ker(\tilde\beta))^\bot$  the symplectically orthogonal
complement of $\ker(\tilde\beta)$ in
 $\ker(\beta)$ (with respect to $d\beta$). It is easily checked that it is equal to the
 trivial bundle
 $$
{\rm Span}\biggl(\Bigl\{\frac{\partial}{\partial{s_1}},
\frac{\partial}{\partial{s_2}}\Bigl\}\biggr)\to J^1{\partial
L}\times\mathbb{R}^2.
 $$

Define  $M_0:=M$, $M_1:=\mathbb{R}\times T^{*}(\partial
L)\times\mathbb{R}^2$, and $M_0'$ and $M_1'$ as above. (Identify
$M_1'\equiv M_1'\times\{(0,0)\}\subset J^1L\times\mathbb{R}^2$).
Since $\xi'^\perp$ is trivial we can pick two vector fields
$V_1,V_2$ such that $V_1,V_2$ form a basis of $\xi'^\perp$ and
satisfy $d\alpha(V_1,V_2)=0$. There exists an obvious symplectic
vector bundle isomorphism
$$
\xi'^\perp|_{M_0'}\to {\rm
Span}\biggl(\Bigl\{\frac{\partial}{\partial{s_1}},
\frac{\partial}{\partial{s_2}}\Bigl\}\biggr)\Biggm|_{M_1'}
$$
given by
$$
\Phi(V_1(x))=\frac{\partial}{\partial{s_1}}\Bigm|_{(\phi_0(x),0,0)}
$$
and
$$
\Phi(V_2(x))=\frac{\partial}{\partial{s_2}}\Bigm|_{(\phi_0(x),0,0)}
$$
for any $x\in M_0'$. By Theorem~2.5.15 of \cite{Ge}, we may extend
$\phi_0$ into a contactomorphism $\phi_1$ from a neighborhood
${\mathcal U}(M_0')$ of $M_0'$ in $M_0=M$ to that ${\mathcal
U}(M_1')$ of $M_1'\equiv M_1'\times\bigl\{(0,0)\}$ in $M_1$ such
that $T\phi_1|_{\xi'^\perp|_{M_0'}}$ and $\Phi$ are bundle homotopic
(as symplectic bundle isomorphisms) up to a conformality. ({\it
Note}: From the proof of \cite[Th.2.5.15]{Ge} it is not hard to see
that the theorem still holds if compact contact submanifold $M_i'$
have boundary and $M_i'\subset{\rm Int}(M_i)$.)

Actually, we may assume that ${\mathcal U}(M_1')$ has the following
form:
\begin{eqnarray}\label{e:3.6}
{\mathcal U}(M_1')&=&\{(t,x,v)\,:\,|t|<\varepsilon,\; v\in
T_x^{*}(\partial L)\;\hbox{with}\;|v|<\varepsilon' \}\nonumber\\
& &\times\{(s_1, s_2)\in\mathbb{R}^2\,:\, |s_1|, |s_2|<\delta\},
\end{eqnarray}
where $0<\varepsilon'<\varepsilon$ and $\delta>0$, and
$${\mathcal
U}(M_0'):=\phi_1^{-1}({\mathcal U}(M_1')).
$$

By suitably shrinking ${\mathcal U}(M_0')$ and ${\mathcal U}(M_1')$
if necessary, we can require
\begin{eqnarray}
&&W_0:=W\cap{\mathcal U}(M_0')\subset {\mathcal U}_0(\partial L,
W),\label{e:3.7}\\
&& \phi_1(W_0)\subset  \mathbb{R}\times T^\ast\partial
L\times\{(0,0)\},\label{e:3.8}\\
&&(t,x,v, s_1, s_2)\in {\mathcal U}(M_1')\Longrightarrow
\phi_1^{-1}(t,x,v,0,0)\in W_0.\nonumber
\end{eqnarray}
Clearly, ${\mathcal U}(M_0')$ and $\phi_1$ satisfy the conditions
(i)-(iii) in Lemma~\ref{lm:3.1}.

For (iv) we need to modify $\phi_1$ and ${\mathcal U}(M_0')$. Since
$\phi_1$ is a contactomorphism,
$$
\phi_{1\ast}\bigl( \xi'^\bot|_{W_0} \bigr)\subset {\rm
Span}\biggl(\Bigl\{\frac{\partial}{\partial{s_1}},
\frac{\partial}{\partial{s_2}}\Bigl\}\biggr)\biggm|_{\phi_1(W_0)}.
$$
It follows that  there exist smooth real functions $f_1,
f_2:W_0\to\mathbb{R}$ such that
$$
\phi_{1*}(V(x))=f_1(x)\frac{\partial}{\partial s_1}|_{\phi(x)} +
f_2(x)\frac{\partial}{\partial s_2}|_{\phi(x)}
$$
and
$$
|f_1(x)|+ |f_2(x)|\ne 0
$$
for any $x\in W_0$, where  $V: W\to\xi'^\bot|_{W}$ is the given
nowhere zero smooth section in Lemma~\ref{lm:3.1}(iv).

Take $\epsilon>0$ sufficiently small so that
$$
 R_\epsilon:=\{(t, x,
v,0,0)\in\mathbb{R}\times T^\ast \partial L\times\mathbb{R}^2\,:\,
|t|\le\epsilon,\;|v|\le\epsilon\}\subset\phi_1(W_0).
$$
Consider the compact symplectic submanifold of $\bigl(T^\ast\partial
L\times\mathbb{R}^2, -d\lambda_{\rm can}-ds_1\wedge ds_2\bigr)$,
\begin{equation}\label{e:3.9}
S_\epsilon:=\{(x, v,0,0)\in T^\ast\partial L\times\mathbb{R}^2\,:\,
|v|\le\epsilon\}.
\end{equation}
 Its symplectic normal bundle is
$$
{\rm Span}\biggl(\Bigl\{\frac{\partial}{\partial{s_1}},
\frac{\partial}{\partial{s_2}}\Bigl\}\biggr)\biggm|_{S_\epsilon},
$$
and $\phi_{1\ast}(V)$ restricts to a nowhere zero smooth section
\begin{equation}\label{e:3.10}
p\mapsto
f_1\circ\phi^{-1}(p)\frac{\partial}{\partial s_1}\Bigm|_{p} +
f_2\circ\phi^{-1}(p)\frac{\partial}{\partial s_2}\Bigm|_{p}.
\end{equation}
Obverse that there exists an obvious symplectic vector bundle
isomorphism
$$
\Psi:{\rm Span}\biggl(\Bigl\{\frac{\partial}{\partial{s_1}},
\frac{\partial}{\partial{s_2}}\Bigl\}\biggr)\biggm|_{S_\epsilon}\to
{\rm Span}\biggl(\Bigl\{\frac{\partial}{\partial{s_1}},
\frac{\partial}{\partial{s_2}}\Bigl\}\biggr)\biggm|_{S_\epsilon}
$$
which sends the section in (\ref{e:3.10}) to one
$$
p\mapsto\frac{\partial}{\partial s_1}\Bigm|_{p}.
$$
Hence the symplectic neighborhood theorem \footnote{From the proof
of \cite[Theorem 3.30]{McSa} it is not hard to see that the theorem still
holds if compact symplectic submanifold $Q_j$ have boundary and
$Q_j\subset{\rm Int}(M_j)$. } (cf. \cite[Theorem 3.30]{McSa}) yields a
symplectomorphism between neighborhoods of $S_\epsilon$ in
$\bigl(T^\ast\partial L\times\mathbb{R}^2, -d\lambda_{\rm
can}-ds_1\wedge ds_2\bigr)$,
$$
\varphi: {\mathcal N}_0(S_\epsilon)\to {\mathcal N}_1(S_\epsilon)
$$
such that
\begin{equation}\label{e:3.11}
\varphi(p)=p\quad\hbox{and}\quad d\varphi(p)=\Psi_p
\end{equation}
for any $p\in S_\epsilon$. In particular, we have
\begin{equation}\label{e:3.12}
 d\varphi(p)\Bigl(
\phi_{1\ast}(V)|_p \Bigr)=\frac{\partial}{\partial
s_1}\Bigm|_{p}\qquad\forall p\in S_\epsilon.
\end{equation}
Since (\ref{e:3.5}) implies
$$
\ker(\beta)\bigm|_{(t,x,v,s_1,s_2)}=T_{(x,v)}T^\ast\partial
L\times{\rm Span}\biggl(\Bigl\{\frac{\partial}{\partial{s_1}},
\frac{\partial}{\partial{s_2}}\Bigl\}\biggr)\biggm|_{(s_1,s_2)},
$$
the map
\begin{equation}\label{e:3.13}
\phi_2:\mathbb{R}\times
{\mathcal N}_0(S_\epsilon)\to \mathbb{R}\times {\mathcal
N}_1(S_\epsilon),\;(t,p)\mapsto (t,\varphi(p))
\end{equation}
must be a contactomorphism with respect to the induced contact
structure from $(\mathbb{R}\times T^{*}(\partial
L)\times\mathbb{R}^2,\beta)$.

Take a  neighborhood ${\mathcal U}$ of $\partial L$ in $M$ such that
\begin{eqnarray*}
&&{\mathcal U}\subset {\mathcal
U}(M_0')\quad\hbox{and}\quad\phi_1({\mathcal U})\subset
\mathbb{R}\times {\mathcal N}_0(S_\epsilon),\\
&&(t, x, v, s_1, s_2)\in\phi_2(\phi_1({\mathcal U}))\Longrightarrow
(t, x, v, 0, 0)\in\phi_2(\phi_1({\mathcal U})).
\end{eqnarray*}
 Then
the composition $\phi:=\phi_2\circ(\phi_1|_{\mathcal U})$ is a
contact embedding from ${\mathcal U}$ into $(\mathbb{R}\times
T^{*}(\partial L)\times\mathbb{R}^2,\beta)$ such that the condition
(iii) is satisfied. By (\ref{e:3.8}) and (\ref{e:3.11}) it is easy
to see that (i) is satisfied for $\phi$ and ${\mathcal U}$, i.e.
$$
 \phi(W\cap{\mathcal U})\subset
\mathbb{R}\times T^\ast\partial L\times\{(0,0)\}.
$$
From (\ref{e:3.1}) and (\ref{e:3.11}) it follows that $\phi(\partial
L)=\{0\}\times\partial L\times\{0,0\}$. That is, (i) holds. Finally,
(\ref{e:3.12}) implies that $\phi$ satisfies the condition (iv),
i.e.
$$
d\phi(p)(V(p))=\frac{\partial}{\partial
s_1}\Bigm|_{\phi(p)}\quad\forall p\in\partial L.
$$
\hfill$\Box$\vspace{2mm}

\newpage

As in \cite{Bu}, with Lemma~3.1  we may construct the desired metric
$\hat{g}$ as follows.

\textbf{Step 1}. Recall that $N$ is  the inward unit normal vector field
of $\partial L$ in $L$ and $N\in\Gamma(\xi'^\bot|_{\partial L})$.
Let ${\mathcal U}$ and $\phi$ be as in the Lemma \ref{lm:3.1} with
$\phi_*(N(p))=\frac{\partial}{\partial s_1}\Bigm|_{\phi(p)}$ for any
$p\in\partial L$. By shrinking $W$ we assume that $N$ has been
extended into a nowhere zero section in $\Gamma(\xi'^\bot|_W)$.
Hence using Lemma~\ref{lm:3.1}(iii) we may define a metric $g'$ on
$\phi({\mathcal U})$  as follows:
$$
g'(t,x,v,
s_1,s_2):=(\phi^{-1})^*\bigr(g|_W(\phi^{-1}(t,x,v,0,0))\bigl)+ds_1\otimes
ds_1+ds_2\otimes ds_2
$$
for every $(t,x,v,s_1,s_2)\in \phi({\mathcal U})$.

 \textbf{Step 2}.
Consider the metric $g_1:=\phi^\ast g'$ on ${\mathcal U}$. Take a
neighborhood ${\mathcal V}$ of $\partial L$ in $M$ such that the
closure of ${\mathcal V}$ is contained in ${\mathcal U}$.
 Let $\rho:M\to\mathbb{R}$ be a smooth function such
that $\rho=1$ on a neighborhood ${\mathcal V}$, and $\rho=0$ outside
${\mathcal U}$. We then define the metric $\hat{g}$ by
$$
\hat{g}:=\rho g_1+(1-\rho)g.
$$

The following two propositions correspond to Propositions~6 and 7 in
\cite{Bu}, respectively.

\begin{proposition}\label{prop:3.2}
For the neighborhood ${\mathcal V}$ of $\partial L$ in Step 2,
$W\cap {\mathcal V}$ is totally geodesic with respect to the metric
$\hat{g}$.
\end{proposition}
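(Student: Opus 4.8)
The plan is to transport the whole statement to the model space $\mathbb{R}\times T^\ast(\partial L)\times\mathbb{R}^2$ through the contact embedding $\phi$ of Lemma~\ref{lm:3.1} and then reduce it to a one-line second fundamental form computation. Since $\rho\equiv 1$ on $\mathcal V$, there $\hat g$ coincides with the pullback $\phi^\ast g'$, so $\phi$ restricts to an isometry of $(\mathcal V,\hat g)$ onto $(\phi(\mathcal V),g')$. Because being totally geodesic is an isometry invariant, it suffices to prove that $\phi(W\cap\mathcal V)$ is totally geodesic in $(\phi(\mathcal U),g')$.

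Next I would identify this image with (an open piece of) the central slice. Set $\Sigma:=\phi(\mathcal U)\cap\bigl(\mathbb{R}\times T^\ast(\partial L)\times\{(0,0)\}\bigr)$. By Lemma~\ref{lm:3.1}(i) we have $\phi(W\cap\mathcal U)\subset\Sigma$, and since $\dim(W\cap\mathcal U)=2n-1=\dim\Sigma$ while $\phi$ is an embedding, $\phi|_{W\cap\mathcal U}$ is a local diffeomorphism onto $\Sigma$, whence $\phi(W\cap\mathcal U)$ is open in $\Sigma$; consequently $\phi(W\cap\mathcal V)$ is open in $\Sigma$ as well. As an open subset of a totally geodesic submanifold is again totally geodesic, the problem collapses to showing that the slice $\Sigma=\{s_1=s_2=0\}$ is totally geodesic in $(\phi(\mathcal U),g')$.

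The heart of the matter is the second fundamental form of $\Sigma$, and this is precisely where the shape of $g'$ built in Step 1 is used. In the coordinates $(t,x,v,s_1,s_2)$ the metric $g'$ splits as $h(t,x,v)+ds_1\otimes ds_1+ds_2\otimes ds_2$, where the horizontal block $h=(\phi^{-1})^\ast(g|_W)$ is independent of $(s_1,s_2)$ and there are no mixed horizontal-vertical components. The normal bundle of $\Sigma$ is spanned by the coordinate fields $\partial/\partial s_1$ and $\partial/\partial s_2$. For any two horizontal coordinate fields $\partial_i,\partial_j$ the Koszul formula expresses $2\langle\nabla_{\partial_i}\partial_j,\partial/\partial s_a\rangle$ as a sum of terms each of which vanishes: the cross components $g'(\partial_i,\partial/\partial s_a)$ and $g'(\partial_j,\partial/\partial s_a)$ are zero, the derivative $(\partial/\partial s_a)h_{ij}$ is zero because $h$ does not depend on $s_1,s_2$, and every Lie bracket of coordinate fields vanishes. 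Hence the second fundamental form of $\Sigma$ is identically zero.

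The only point demanding care is the openness step, that is, the verification that $\phi$ carries $W\cap\mathcal U$ \emph{onto} an open subset of the slice $\Sigma$ rather than into a thin piece of it; this is guaranteed by the equality of dimensions together with the slice conditions (i) and (iii) of Lemma~\ref{lm:3.1} that were arranged in its proof. Granting this, the vanishing of the second fundamental form shows $\Sigma$, and therefore its open subset $\phi(W\cap\mathcal V)$, is totally geodesic in $(\phi(\mathcal U),g')$; transporting this back through the isometry $\phi$ proves that $W\cap\mathcal V$ is totally geodesic with respect to $\hat g$, as claimed.
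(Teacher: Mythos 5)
Your proposal is correct and follows essentially the same route as the paper: both arguments come down to the fact that, in the product coordinates $(t,x,v,s_1,s_2)$ furnished by Lemma~\ref{lm:3.1}, the metric $\hat g=\phi^\ast g'$ is block-diagonal with the $W$-block independent of $(s_1,s_2)$, so the Koszul/Christoffel computation kills the second fundamental form of the slice $\{s_1=s_2=0\}$. The only (cosmetic) difference is that you perform the computation in the model space and transport back through the isometry $\phi$, whereas the paper pulls the coordinates back to $M$ and computes there; your explicit dimension-count justification that $\phi(W\cap\mathcal U)$ is open in the slice is a small point the paper leaves implicit.
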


\noindent{\bf Proof.}\quad For any $p\in W\cap {\mathcal V}$,
let $\phi(p)=(t(p),\phi^\ast(p), 0,0)\in
 \mathbb{R}\times T^{*}(\partial
L)\times\{(0,0)\}$, where $\phi^\ast(p)\in T^\ast L$. By composing
the map $\phi$ in Lemma~\ref{lm:3.1} with the canonical coordinate system on $T^\ast L$
around $\phi^\ast(p)$ we obtain a local contact coordinate system around
it,
$$
\mathcal
{O}(p)\to\mathbb{R}\times\mathbb{R}^{2n-2}\times\mathbb{R}^2,\;
q\mapsto (t(q), z_1(q),\cdots, z_{2n-2}(q), s_1(q), s_2(q))
$$
such that
\begin{enumerate}
\item[$\bullet$] for some
smooth function $h:{\mathcal O}(p)\to\mathbb{R}$ it holds that
\begin{equation}\label{e:3.14}
\alpha|_{\mathcal
{O}(p)}=e^h\left(dt-\sum^{n-1}_{k=1}z_{n-1+k}dz_k-s_2ds_1\right),
\end{equation}
  and the Reeb field $R_\alpha=\frac{\partial}{\partial t}$;

\item[$\bullet$] $W\cap{\mathcal O}(p)\ni q\mapsto (t(q), z_1(q),\cdots,
z_{2n-2}(q))$  is
 a local contact coordinate system around $p$ in  the relatively open neighborhood
$W\cap {\mathcal O}(p)$  and
\begin{equation}\label{e:3.15}
\alpha|_{W\cap {\mathcal
O}(p)}=e^{h_0}\left(dt-\sum^{n-1}_{k=1}z_{n-1+k}dz_k\right),
\end{equation}
where $h_0=h|_{W\cap {\mathcal O}(p)}$. Moreover the Reeb field of
$\alpha|_{W\cap {\mathcal O}(p)}$ is given by the restriction of
$\frac{\partial}{\partial t}$ to $W\cap {\mathcal O}(p)$.
\end{enumerate}

For convenience we write $t$ as $z_0$. In the corresponding local
coordinate vector fields
$$
\frac{\partial}{\partial z_0}=\frac{\partial}{\partial t},
\frac{\partial}{\partial z_1}, \frac{\partial}{\partial z_2},\cdots,
\frac{\partial}{\partial z_{2n-2}},\frac{\partial}{\partial s_1},
\frac{\partial}{\partial s_2}
$$
 we have
\begin{equation}\label{e:3.16}
\hat{g}=\sum^{n-1}_{k, l=0}(g|_{W})_{kl}dz_k\otimes dz_l+ds_1\otimes ds_1+ ds_2\otimes ds_2.
\end{equation}
It is easily computed that
$$
\hat{g}\left(\nabla_{\frac{\partial}{\partial
z_k}}\frac{\partial}{\partial z_l},
  \frac{\partial}{\partial s_i}\right)=\frac{1}{2}
  \bigl(\hat g_{z_ks_i,z_l}+ \hat g_{z_ls_i,z_k}-\hat g_{z_kz_l,s_i}\bigr)=0.
  $$
So the second fundamental form of $W\cap {\mathcal V}$ with respect
to $\hat g$ vanishes, that is, $W\cap {\mathcal V}$ is totally
geodesic.\hfill$\Box$\vspace{2mm}

\begin{proposition}\label{prop:3.3}
Let $L$ be a compact Legendrian submanifold with boundary of the
contact manifold $(M,\alpha)$, and let $W$ be a codimension two
scaffold for $L$. Denote by $\widehat{N}(L)$ the normal bundle of
$L$ with respect to $\hat{g}$. For $p\in\partial L$, suppose that
$\hat{V}\in\widehat{N}_p(L)$ satisfies the boundary condition
$$
(d\alpha)_p(N(p), \hat V)=0.
$$
Then  $\hat{V}\in T_pW$,  and  $\hat{V}-\alpha(\hat{V})R_\alpha(p)$ cannot be in $T_pL$ if it is not zero.
\end{proposition}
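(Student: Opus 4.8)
The plan is to turn the statement into linear algebra at the single point $p$, working in the contact coordinates $(t,z_1,\dots,z_{2n-2},s_1,s_2)$ supplied by Lemma~\ref{lm:3.1} and Step~1, in which $\hat g$ coincides near $\partial L$ with the split metric $g'$. In these coordinates $W=\{s_1=s_2=0\}$, $N(p)=\partial/\partial s_1$, and $\xi'^\bot_p=\mathrm{Span}(\partial/\partial s_1,\partial/\partial s_2)$. The key structural input is that $g'$ has no cross terms between the $W$-directions and the $s$-directions, so the decomposition $T_pM=T_pW\oplus\xi'^\bot_p$ is $\hat g$-orthogonal; in particular $\xi'^\bot_p\perp_{\hat g}T_pW$ and $N(p)$ is $\hat g$-normal to $W$. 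Alongside this I would use the two symplectic splittings $\xi_p=\xi'_p\oplus\xi'^\bot_p$ (orthogonal for $d\alpha$, by definition of $\xi'^\bot$) and $T_pW=\xi'_p\oplus\mathbb{R}R_\alpha$.

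First I would describe $\widehat N_p(L)$ explicitly. Since $T_pL=T_p\partial L\oplus\mathbb{R}N(p)$ with $T_p\partial L\subset\xi'_p\subset T_pW$ and $N(p)\in\xi'^\bot_p$, and since $T_pW$ and $\xi'^\bot_p$ are $\hat g$-orthogonal, a vector $\hat V=\hat V_1+\hat V_2$ (with $\hat V_1\in T_pW$ and $\hat V_2\in\xi'^\bot_p$) is $\hat g$-normal to $L$ precisely when $\hat V_1\perp_{\hat g}T_p\partial L$ and $\hat V_2\perp_{\hat g}N(p)$. Next I would insert the boundary hypothesis. Because $N(p)\in\xi'^\bot_p$, the $1$-form $d\alpha(N(p),\cdot)$ annihilates $\xi'_p$ (as its $d\alpha$-orthogonal complement) and $R_\alpha$ (as $\iota_{R_\alpha}d\alpha=0$), hence vanishes on all of $T_pW$; thus $d\alpha(N(p),\hat V)=d\alpha(N(p),\hat V_2)$ and the hypothesis collapses to $d\alpha(N(p),\hat V_2)=0$.

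The heart of the argument, and the step I expect to require the most care, is to conclude $\hat V_2=0$. Here I would exploit that $d\alpha$ restricts to a nondegenerate form on the two-plane $\xi'^\bot_p$: the linear functional $d\alpha(N(p),\cdot)$ on $\xi'^\bot_p$ has kernel exactly $\mathbb{R}N(p)$, so $d\alpha(N(p),\hat V_2)=0$ forces $\hat V_2\in\mathbb{R}N(p)$, and then $\hat V_2\perp_{\hat g}N(p)$ with $N(p)\neq0$ gives $\hat V_2=0$. It is exactly at this juncture that both features of the construction are indispensable: the $\hat g$-splitting coming from Proposition~\ref{prop:3.2} (so that the normality condition decouples into the two blocks), and the two-dimensionality together with symplectic nondegeneracy of $\xi'^\bot$ built into the scaffold (Definition~\ref{def:2.3}). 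With $\hat V_2=0$ we obtain $\hat V=\hat V_1\in T_pW$ with vanishing $\xi'^\bot$-component, and, $T_pW$ being $\xi'_p\oplus\mathbb{R}R_\alpha$, the contact part of $\hat V$ lies in $\xi'_p$, which is the assertion. The parenthetical claim is then immediate: any $\hat V\in\widehat N_p(L)\cap T_p\partial L$ is $\hat g$-orthogonal to $T_pL\supset T_p\partial L$, hence orthogonal to itself, whence $\hat V=0$.
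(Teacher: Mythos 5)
Your argument is correct, and essentially a coordinate-free version of what the paper does in the chart of Lemma~\ref{lm:3.1}, up to and including the conclusion $\hat V_2=0$: the $\hat g$-orthogonality of the splitting $T_pM=T_pW\oplus\xi'^\bot_p$, the reduction of the boundary condition to $d\alpha(N(p),\hat V_2)=0$ (using $N(p)\in\xi'^\bot_p$ and $\iota_{R_\alpha}d\alpha=0$), the nondegeneracy of $d\alpha$ on the symplectic two-plane $\xi'^\bot_p$, and $\hat V_2\perp_{\hat g}N(p)$ together give $\hat V=\hat V_1\in T_pW$. This parallels the paper's computation via (\ref{e:3.17})--(\ref{e:3.18}), which likewise only kills the coefficient $b$ of the $\xi'^\bot$-direction. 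Your one-line proof of the parenthetical claim is also fine.

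The gap is in your last sentence. You have $T_pW=\xi'_p\oplus\mathbb{R}R_\alpha(p)$, and ``the contact part of $\hat V$ lies in $\xi'_p$'' is strictly weaker than the asserted conclusion $\hat V\in\xi'_p$: nothing in your argument excludes a Reeb component, and in fact it cannot be excluded. The vector $R_\alpha(p)$ itself belongs to $\widehat N_p(L)$ (it is $g$-orthogonal to $T_p\partial L\subset\xi_p$ because $g=g_J+\alpha\otimes\alpha$ gives $g(R_\alpha,u)=\alpha(u)$, and it is $\hat g$-orthogonal to $N(p)$ by the block structure of $g'$), and it satisfies the boundary condition because $\iota_{R_\alpha}d\alpha=0$; yet $\alpha(R_\alpha)=1$, so $R_\alpha(p)\notin\xi'_p$. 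Thus what your argument actually proves is $\hat V\in T_pW$ --- which is all that Remark~\ref{rm:3.4} requires --- and not $\hat V\in\xi'_p$, which is the strengthening that Section~\ref{sec:3} invokes to conclude $\alpha(V)|_{\partial L}=0$. The paper's own proof obscures the same point at the same spot: its displayed spanning set of $\widehat N_p(L)$ contains only $n$ vectors although $\dim\widehat N_p(L)=n+1$, and the omitted direction is exactly this Reeb-type normal vector. You should either supply an argument controlling $\alpha(\hat V)$ --- which I do not see how to do from the stated hypotheses --- or record that your method yields only the weaker conclusion $\hat V\in T_pW$.
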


\noindent{\bf Proof.}\quad For any point $p\in\partial L$, take the
local coordinate system around it on $\partial L$,
${\bf U}(p)\ni q\to (x_1(q),\cdots,x_{n-1}(q))\in\mathbb{R}^{n-1}$, such that
$x_j(p)=0$, $j=1,\cdots,n-1$. It induce a natural
local coordinate system around $p=(p,0)$ on $T^\ast(\partial L)$,
$$
\pi^{-1}({\bf U}(p))\ni (q,v)\mapsto (x_1(q),\cdots,x_{n-1}(q), y_1(q,v),\cdots,y_{n-1}(q,v)),
$$
where $\pi:T^\ast(\partial L)\to\partial L$ is the natural bundle projection.

Let $\phi$ be as in Lemma~\ref{lm:3.1} and satisfy
  $\phi_*(N(p))=\frac{\partial}{\partial s_1}\bigm|_{\phi(p)}$ for any $p\in\partial L$.
Choose a small open neighborhood $\mathcal
{O}(p)$ of $p$ in $M$ such that $\mathcal
{O}(p)\subset \mathcal{U}(p)$ and
$$
\phi(\mathcal
{O}(p))\cap\bigl(\{0\}\times T^{*}(\partial
L)\times\{(0,0)\}\bigr)\subset\{0\}\times\pi^{-1}({\bf U}(p))\times\{0\}\equiv \pi^{-1}({\bf U}(p)),
$$
where  $\mathcal{U}(p)$ and $\phi$ are as in Lemma~\ref{lm:3.1}.
For $q\in \mathcal
{O}(p)$ let $\phi(q)=(t(q), \tilde{q}, s_1(q), s_2(q))$, where $\tilde{q}\in T^{*}(\partial
L)$, and $z_j(q)=x_j(\pi(\tilde{q}))$ and $z_{n-1+j}(q)=y_j(\tilde{q})$ for $j=1,\cdots,n-1$.
Then $t_1(p)=s_1(p)=s_2(p)=z_j(p)=0$, $j=1,\cdots,2n-2$ and
$$
\mathcal
{O}(p)\to\mathbb{R}\times\mathbb{R}^{2n-2}\times\mathbb{R}^2,\;
q\mapsto (t(q), z_1(q),\cdots, z_{2n-2}(q), s_1(q), s_2(q))
$$
is a coordinate system satisfying (\ref{e:3.14})-(\ref{e:3.15}). Moreover
 we have
 \begin{enumerate}
\item[(A)]  $\frac{\partial}{\partial
s_1}\bigm|_p$ and $\frac{\partial}{\partial
s_2}\bigm|_p$ are $\hat{g}_p$-orthogonal, and they are also $\hat{g}_p$-orthogonal to $T_pW$;

\item[(B)] $\frac{\partial}{\partial z_1}\bigm|_p,\cdots,
\frac{\partial}{\partial z_{n-1}}\bigm|_p$
forms a basis of $T_p\partial L$,  and
$\frac{\partial}{\partial z_1}\bigm|_p,\cdots,
\frac{\partial}{\partial z_{n-1}}\bigm|_p, \frac{\partial}{\partial
s_1}\bigm|_p$
is a basis of $T_pL$ since the normal vector field $N$ of $\partial L$ in $L$ in the local coordinate system is
equal to $\frac{\partial}{\partial s_1}$;

\item[(C)] $\xi'_p$ is spanned by $\frac{\partial}{\partial z_{n}}\bigm|_p,\cdots, \frac{\partial}{\partial z_{2n-2}}\bigm|_p$ and
$$
\left(z_n\frac{\partial}{\partial t}+ \frac{\partial}{\partial z_{1}}\right)\Bigm|_p=
\frac{\partial}{\partial z_{1}}\Bigm|_p,\cdots,
\left(z_{2n-2}\frac{\partial}{\partial t}+ \frac{\partial}{\partial z_{n-1}}\right)\Bigm|_p=
\frac{\partial}{\partial z_{n-1}}\Bigm|_p;
$$
\item[(D)] $\xi_p$ is spanned by $\frac{\partial}{\partial z_{n}}\bigm|_p,\cdots, \frac{\partial}{\partial z_{2n-2}}\bigm|_p$, $\frac{\partial}{\partial s_{2}}\bigm|_p$ and
   $\left(s_2\frac{\partial}{\partial t}+ \frac{\partial}{\partial s_{1}}\right)\Bigm|_p=
\frac{\partial}{\partial s_{1}}\Bigm|_p$,
$$
\left(z_n\frac{\partial}{\partial t}+ \frac{\partial}{\partial z_{1}}\right)\Bigm|_p=
\frac{\partial}{\partial z_{1}}\Bigm|_p,\cdots,
\left(z_{2n-2}\frac{\partial}{\partial t}+ \frac{\partial}{\partial z_{n-1}}\right)\Bigm|_p=
\frac{\partial}{\partial z_{n-1}}\Bigm|_p;
$$
\item[(E)] $R_\alpha(p)=\frac{\partial}{\partial t}\bigm|_p$ is $\hat{g}_p$-orthogonal to $\xi'_p$ and
$\xi_p$.
\end{enumerate}
From these we deduce that every $\hat V\in\widehat{{N}}_p(L)$ can be expressed as
$$
\hat V=a_n\frac{\partial}{\partial z_n}\Bigm|_p+\cdots +
a_{2n-2}\frac{\partial}{\partial z_{2n-2}}\Bigm|_p + b\frac{\partial}{\partial s_2}\Bigm|_p+
\lambda\frac{\partial}{\partial t}\Bigm|_p,
$$
where $b$, $\lambda$ and $a_n,\cdots,a_{2n-2}$ are real numbers.
Suppose that $(d\alpha)_p(N(p), \hat V)=0$. Since
$N(p)=\frac{\partial}{\partial s_1}\bigm|_p$,
$R_\alpha(p)=\frac{\partial}{\partial t}\bigm|_p$ and hence $i_{R_\alpha}(d\alpha)=0$ we have
\begin{equation}\label{e:3.17}
(d\alpha)_p\biggl(\frac{\partial}{\partial s_1}\Bigm|_p,
a_n\frac{\partial}{\partial z_n}\Bigm|_p+\cdots +
a_{2n-2}\frac{\partial}{\partial z_{2n-2}}\Bigm|_p + b
\frac{\partial}{\partial s_2}\Bigm|_p\biggr)=0.
\end{equation}
By (\ref{e:3.14}) it is easy computed that
\begin{eqnarray}\label{e:3.18}
d\alpha|_{\mathcal
{O}(p)}&=&e^h\left(-ds_2\wedge ds_1-\sum^{n-1}_{k=1}dz_{n-1+k}\wedge dz_k\right)\nonumber\\
&+&e^hdh\wedge\left(dt-\sum^{n-1}_{k=1}z_{n-1+k}dz_k-s_2ds_1\right).
\end{eqnarray}
 It follows from
(\ref{e:3.17})-(\ref{e:3.18}) that
\begin{eqnarray*}
0&=&-\left|\begin{array}{ll}
0& b\\
1&0
\end{array}\right|-\sum^{n-1}_{k=1}\left|\begin{array}{ll}
0&  a_{n-1+k}\\
0&0
\end{array}\right|\\
&+&\left|\begin{array}{ll}
\frac{\partial h}{\partial s_1}(p)& a_n\frac{\partial h}{\partial z_n}(p)+\cdots +
a_{2n-2}\frac{\partial h}{\partial z_{2n-2}}(p) + b
\frac{\partial h}{\partial s_2}(p)\\
0& 0
\end{array}\right|\\
&-&\sum^{n-1}_{k=1}
\left|\begin{array}{ll}
\frac{\partial h}{\partial s_1}(p)&
a_n\frac{\partial h}{\partial z_n}(p)+\cdots +
a_{2n-2}\frac{\partial h}{\partial z_{2n-2}}(p) + b
\frac{\partial h}{\partial s_2}(p)\\
0& 0\end{array}\right|\\
&-&
\left|\begin{array}{ll}
\frac{\partial h}{\partial s_1}(p)&
a_n\frac{\partial h}{\partial z_n}(p)+\cdots +
a_{2n-2}\frac{\partial h}{\partial z_{2n-2}}(p) + b
\frac{\partial h}{\partial s_2}(p)\\
s_2(p)&0
\end{array}\right|.
\end{eqnarray*}
Noting $s_2(p)=0$, we obtain $b=0$ and so
$$
\hat V=a_n\frac{\partial}{\partial z_n}\Bigm|_p+\cdots +
a_{2n-2}\frac{\partial}{\partial z_{2n-2}}\Bigm|_p +
\lambda\frac{\partial}{\partial t}\Bigm|_p\in T_pW.
$$
Clearly, if $\hat{V}-\alpha(\hat{V})R_\alpha(p)\ne 0$, it is not in $T_pL$.
 \hfill$\Box$\vspace{2mm}

\begin{remark}\label{rm:3.4}
 Let $\widehat{\exp}$ be the
exponent map of the metric $\hat{g}$. For any $p\in\partial L$ and
$v\in \widehat{N}_p(L)$ with $d\alpha(N(p), v)=0$,
Proposition~\ref{prop:3.2} and Proposition~\ref{prop:3.3}  show that
$\widehat{\exp}(p,v)\in W\cap {\mathcal U}$ if $|v|$ is small
enough.
\end{remark}

\begin{remark}\label{rm:3.5}
From (E) in the proof of Proposition~\ref{prop:3.3}
we see that $R_\alpha(p)$ is $\hat{g}_p$-orthogonal to $\xi'_p$ and $\xi_p$ at each $p\in\partial L$.
It follows that the Reeb vector $R_\alpha(p)$ at each $p\in\partial L$
belongs to not only $N_p(L)\cap\hat{N}_p(L)$ but also $N_p(\partial L)\cap\hat{N}_p(\partial L)$.
Note that we cannot obtain such conclusions at $p\in L\setminus\partial L$.
\end{remark}

\section{The proof of Theorem~\ref{th:1.1}}\label{sec:3}

\noindent{\bf 4.1. A  brief review of notations in Hodge theory}.
 For $k\in\mathbb{N}\cup\{0\}$, $1\le p<\infty$ and $0<a<1$, let
$W^{k,p}\Omega^r(L)$ (resp. $C^{k,a}\Omega^r(L)$) denote the space
of $r$-forms of class $W^{k,p}$ (resp. $C^{k,a}$) as usual (cf.
\cite{Mor, Sch}).  Each form $\omega$ of them  has a ``tangential
component" $\textbf{t}\omega$ and a ``normal component"
$\textbf{n}\omega$ (cf.
 \cite[Def.4.2]{Mor0} or \cite[(2.25)]{Sch}), which satisfy
\begin{equation}\label{e:4.1}
{\bf t}(\star\omega)=\star({\bf n}\omega)\quad\hbox{and}\quad {\bf
n}(\star\omega)=\star({\bf t}\omega)
\end{equation}
by Lemma 4.2 of \cite{Mor0}, where $\star$ is the Hodge
 star operator of the metric $\hat g$. Set
 \begin{eqnarray*}
&&C^{k,a}\Omega_{\bf D}^r(L):=\{\omega\in
C^{k,a}\Omega^r(L):\textbf{t}\omega=0\},\\
&&C^{k,a}\Omega_{\bf N}^r(L):=\{\omega\in
C^{k,a}\Omega^r(L):\textbf{n}\omega=0\}
\end{eqnarray*}
and
\begin{eqnarray*}
&&\mathcal{H}C^{k,a}\Omega^r(L):=\{\omega\in
C^{k,a}\Omega^r(L):d\omega=\delta\omega=0\},\\
&&\mathcal{H}C^{k,a}\Omega_{\bf D}^r(L):=\{\omega\in
C^{k,a}\Omega_{\bf D}^r(L):d\omega=\delta\omega=0\},\\
&&\mathcal{H}C^{k,a}\Omega_{\bf N}^r(L):=\{\omega\in
C^{k,a}\Omega_{\bf N}^r(L):d\omega=\delta\omega=0\}.
\end{eqnarray*}

Replacing $C^{k,a}$ by $W^{k,p}$ gives corresponding spaces
$W^{k,p}\Omega_{\bf D}^r(L)$, $W^{k,p}\Omega_{\bf D}^r(L)$ and
$\mathcal{H}W^{k,p}\Omega^r(L)$, $\mathcal{H}W^{k,p}\Omega_{\bf D}^r(L)$,
$\mathcal{H}W^{k,p}\Omega_{\bf N}^r(L)$.
 Clearly, for $S^r_{\bf N}=C^{k,a}\Omega^{r}_{\bf N}(L)$ and $S^r_{\bf
D}=C^{k,a}\Omega^{r}_{\bf D}(L)$ (or $S^r_{\bf N}=W^{k,
p}\Omega^{r}_{\bf N}(L)$ and $S^r_{\bf D}=W^{k, p}\Omega^{r}_{\bf
D}(L)$),  (\ref{e:4.1}) implies
\begin{equation}\label{e:4.2}
\star\bigl(S^{r}_{\bf N}\bigr)\subset S^{n-r}_{\bf
D}\quad\hbox{and}\quad \star\bigl(S^{r}_{\bf D}\bigr)\subset
S^{n-r}_{\bf N}.
\end{equation}
By the definition of the co-differential $\delta$, for any $r$-form
$\omega$ it holds that
\begin{equation}\label{e:4.3}
\star(\star\omega)=(-1)^{r(n-r)}\omega,\qquad
\star\delta\omega=(-1)^rd\star\omega,\qquad\star
d\omega=(-1)^{r+1}\delta\star\omega.
\end{equation}
 For $k\in\mathbb{N}\cup\{0\}$  the closure  $C^{k,a}(d\Omega^{r}(L))$ of
$d\Omega^r(L)$ in $C^{k,a}\Omega^{r+1}(L)$ is contained $\{d\eta:\,
\eta\in C^{k+1,a}\Omega^{r}(L)\}$ by  the Poincar\'e lemma (cf. \S3.1
of \cite{Bu}{\rm ).

\noindent{\bf 4.2. Defining the differential operator}.
By Propositions~\ref{prop:3.2},~\ref{prop:3.3} (and Tubular Neighborhood Theorem) the
sufficiently small neighborhood of the zero section of
$\widehat{{N}}(L)$ satisfying the Neumann boundary condition corresponds to
the deformations of submanifold $L$ with boundary $\partial L$
confined in $W$ in one-to-one way.


Let $C^{2,a}(\Gamma(\widehat{{N}}(L)))$ denote the Banach space of
$C^{2,a}$-sections of the bundle $\widehat{{N}}(L)$.
 Define the Banach spaces
\begin{eqnarray*}
&&{\mathcal{X}}_1:=\bigl\{V\in C^{2,a}(\Gamma(\widehat{{N}}(L)))\,:
\;d\alpha(N, V|_{\partial L})=0\;\&\;\alpha(V)|_{\partial L}=0\bigr\},\\
&&{\mathcal{X}}_2:=\bigl\{V\in C^{2,a}(\Gamma(\widehat{{N}}(L)))\,:
\;d\alpha(N, V|_{\partial L})=0\;\&\; d\alpha(V)|_{\partial L}=0\bigr\},\\
&&{\mathcal{X}}_3:=\bigl\{V\in C^{2,a}(\Gamma(\widehat{{N}}(L)))\,:
\;d\alpha(N, V|_{\partial L})=0\;\&\;\alpha(V)|_{\partial L}={\rm constants}\bigr\},\\
&&{\mathcal{X}}_4:=\bigl\{V\in C^{2,a}(\Gamma(\widehat{{N}}(L)))\,:
\;d\alpha(N, V|_{\partial L})=0\bigr\}.
\end{eqnarray*}
Then ${\mathcal{X}}_4\subset {\mathcal{X}}_3\subset {\mathcal{X}}_2\subset {\mathcal{X}}_1$.
Let ${\mathcal{X}}$ be one of ${\mathcal{X}}_i$, $i=1,2,3,4$.
Denote by  $\mathscr{U}$ a neighborhood of $0$ in
 ${\mathcal X}$.
For $V\in \mathscr{U}$ define  $\widehat\exp_V:L\to M,\,x\mapsto
\widehat\exp_V(x):=\widehat\exp_x(V(x))$. Set
\begin{eqnarray}\label{e:4.4}
 &&F:\mathscr{U}\to C^{1,a}\Omega^{1}(L)\oplus
C^{0,a}\Omega^{n}(L),\\
&&\qquad V\mapsto\bigl( (\widehat\exp_V)^\ast\alpha,
2(\widehat\exp_V)^\ast{\rm Im}\epsilon\bigr).\nonumber
\end{eqnarray}
 It is $C^1$ as done in \cite{Bu, ToVe}.
Clearly,  $\widehat\exp_{V}$ is homotopic to the inclusion
$j:L\hookrightarrow M$ via $\widehat\exp_{tV}$, and hence they
induce the same homomorphisms between the de Rham cohomology groups.
It follows that the de Rham cohomology classes
$$
[\widehat\exp_{ V}^\ast({\rm Im}\epsilon)]=\widehat\exp_{
V}^\ast[{\rm Im}\epsilon]=j^\ast[{\rm Im}\epsilon]=[j^\ast({\rm
Im}\epsilon)]\in H^n(L,\mathbb{R})\quad\hbox{vanish}.
$$
This shows that
$$
{\rm Im} F\subseteq C^{1,a}\Omega^1(L)\oplus
dC^{1,a}\Omega^{n-1}(L).
$$
Consider $F$ as a map to $C^{1,a}\Omega^1(L)\oplus
dC^{1,a}\Omega^{n-1}(L)$.

\noindent{\bf 4.3. Proving that the differential of  $F$ at $0$ is surjective}.
To compute the differential of  $F$ at $0$, for $V\in\mathcal{X}$ we
set $f=\alpha(V)$ and $Y:=V-fR_\alpha$. Then $f\in C^{2, a}(L)$ and
$Y\in C^{1,a}(\Gamma(\xi|_L))$.
 Now $V=fR_\alpha+ Y$. By the Cartan formula one can
compute the linearization of $F$ at $0$,
\begin{equation}\label{e:4.5}
   \begin{split}
     F'(0)(V)=&\frac{d}{dt}(\widehat\exp^{*}_{t V}\alpha,2\widehat\exp^{*}_{t V}{\rm Im}\epsilon)|_{t=0}\\
        =&(\mathcal{L}_{ V}\alpha,2\mathcal{L}_{ V}{\rm Im}\epsilon)|_{L}\\
        =&(d\iota_{fR_{\alpha}+ Y}\alpha+\iota_{fR_{\alpha}+ Y}d\alpha,2d
                  \iota_{fR_{\alpha}+ Y}{\rm Im}\epsilon)|_{L}\\
        =&(j^\ast df+ j^\ast\iota_{Y}d\alpha,2dj^\ast\iota_{Y}{\rm Im}\epsilon)\\
        =&(j^\ast df+ j^\ast\iota_{Y}d\alpha,-d\star j^\ast\iota_{Y}d\alpha)\\
        =&\bigl(d(f\circ j)+ j^\ast(\iota_{Y}d\alpha),-d\star
        j^\ast(\iota_{Y}d\alpha)\bigr).
     \end{split}
\end{equation}
Here the fifth equality comes from (\ref{e:2.2}) with the star operator $\star$ of $g|_L$ and $\epsilon|_L$.

In order to show that $F'(0)$ is surjective, we only need consider the case
${\mathcal{X}}={\mathcal{X}}_4$. To this end let us  write each
$$
(\eta,d\zeta)\in C^{1,a}\Omega^1(L)\oplus
dC^{1,a}\Omega^{n-1}(L)
$$
as a convenient form.

 Note that ${\bf
t}(d\omega)=d({\bf t}\omega)$ and ${\bf n}(\delta\omega)=\delta({\bf
n}\omega)$  for any $C^1$-form $\omega$ on $L$ (cf. \cite[Prop.
1.2.6(b)]{Sch}). Since  $C^{0,a}\Omega^{n-1}(L)\subset
L^{2}\Omega^{n-1}(L)$, by \cite[Th.5.7, 5.8]{Mor0} or
\cite[Th.7.7.7, 7.7.8]{Mor} we may write $\zeta= \delta_n\gamma'+
d\gamma''+ h(\zeta)$, where
$$
\gamma'\in C^{1,a}\Omega^{n}_{\bf N}(L)),\quad\gamma''\in
C^{1,a}\Omega^{n-2}_{\bf D}(L),\quad  h(\zeta)\in {\mathcal
H}C^{0,a}\Omega^{n-1}(L).
$$
Moreover  (\ref{e:4.2}) and ${\bf t}(d\omega)=d({\bf t}\omega)$
imply
$$
\delta_n=(-1)^{n(n+1)+1}\star d_0\star:C^{2,a}\Omega^{n}_{\bf
N}(L)\to C^{1,a}\Omega^{n-1}_{\bf N}(L).
$$
 We may assume
$$
d\zeta=-d\star du\quad\mbox{ with }\quad u\in
C^{2,a}\Omega^0_{\bf D}(L).
$$
Similarly, we have
$$
\eta=\delta v+ d\beta+ h(\eta),
$$
where
$$
v\in C^{2,a}\Omega^{2}_{\bf N}(L)),\quad \beta\in
C^{2,a}\Omega^{0}_{\bf D}(L),\quad  h(\eta)\in {\mathcal
H}C^{1,a}\Omega^{1}(L)).
$$
By (\ref{e:4.3}), $d\star\delta v=(-1)^2d (d\star v)=0$ and $d(\star
h(\eta))=(-1)\star\delta h(\eta)=0$. We get
\begin{eqnarray*}
(\eta,d\zeta)&=&\bigl(d\beta-du+du+ \delta v+h(\eta),-d\star(du+
\delta v+h(\eta)\bigr)\nonumber\\
&=&(d\chi + \omega,-d\star\omega),
\end{eqnarray*}
 where
 \begin{equation}\label{e:4.6}
 \begin{split}
&\chi:=\beta-u\in C^{2,a}\Omega^{0}_{\bf D}(L), \\
& \omega:=du+ \delta v+h(\eta)\in C^{1,a}\Omega^{1}(L).
\end{split}
\end{equation}
Note that $C^{2,a}\Omega^{0}_{\bf D}(L)=\{f\in C^{2,a}\Omega^{0}(L)\,|\, f|_{\partial L}=0\}$
and that $R_\alpha\in C^{1,a}(\Gamma(\widehat{N}(L)))$.
If we find a $Z\in C^{1,a}(\Gamma(\widehat{N}(L)))$ such that
\begin{equation}\label{e:4.7}
j^*(\iota_{Z}d\alpha)=\omega,
\end{equation}
then $V:=\chi R_\alpha+ Z$ belongs to ${\mathcal{X}}_4$ and satisfies $F'(0)(V)=(\eta, d\zeta)$.

To obtain (\ref{e:4.7}), consider the symplectic vector bundle $(\xi|_L,
d\alpha|_{\xi|_L})$ with a Lagrangian subbundle $TL$. Let
$TL^{\bot_{\hat g}}_\xi$ be the orthogonal complementary bundle of
$TL$ in $\xi|_L$ with respect to $\hat g$. Then $TL^{\bot_{\hat
g}}_\xi=\xi\cap\widehat{N}(L)$. So $\xi|_L=TL\oplus_{\hat
g}(\xi\cap\widehat{N}(L))$.
  Note that
$\omega$ may be viewed as a section of the bundle ${\rm
Hom}(TL,\mathbb{R})$. We may extend it into a section of ${\rm
Hom}(\xi|_L,\mathbb{R})$, $\hat\omega$, by defining
$$
\hat\omega_p(u+v)=\omega_p(u)
$$
for any $p\in L$ and $u+ v\in T_pL\oplus_{\hat g}(\xi\cap\widehat{
N}(L))_p$, where $u\in T_pL$ and $v\in(\xi\cap\widehat{N}(L))_p$.
Note that $\hat\omega\in C^{1,a}(\Gamma({\rm
Hom}(\xi|_L,\mathbb{R})))$. The non-degeneracy of $d\alpha$ on $\xi$
implies that there exists a unique section $Z:L\to\xi|_L$ such that
$$
(d\alpha)_p(Z(p), A)=\hat\omega_p(A)\quad\forall p\in
L\;\hbox{and}\; A\in\xi_p.
$$
Clearly, $Z\in C^{1,a}(\xi|_L)$. Since $\xi|_L=TL\oplus_{\hat
g}(\xi\cap\widehat{N}(L))$ we get a unique decomposition $Z=Z_1+
Z_2$, where $Z_1\in C^{1,a}(\Gamma(TL))$ and $Z_2\in
C^{1,a}(\Gamma(\xi\cap\widehat{N}(L)))$. Obverse that
$$
j^*(\iota_{Z_1}d\alpha)=0.
$$
In fact, for any $p\in L$ and $u\in T_pL$ it holds that
$$
(j^*(\iota_{Z_1}d\alpha))_p(u)=(\iota_{Z_1}d\alpha))_{j(p)}(j_\ast
u)=(d\alpha)_p(Z_1(p), u)=0
$$
since $T_pL$ is a Lagrangian subspace of $(\xi_p, (d\alpha)_p)$.
Hence we get
$$
(d\alpha)_p(Z_2(p), A)=\hat\omega_p(A)\quad\forall p\in
L\;\hbox{and}\; A\in\xi_p.
$$
This implies $j^*(\iota_{Z_2}d\alpha)=\omega$.  In summary we have
proved:
\begin{claim}\label{cl:4.1}
There exists a unique section $Z:L\to \xi|_L\cap \widehat{N}(L)$
such that {\rm(\ref{e:4.7})} is satisfied. Moreover, $Z$ is also of
class $C^{1,a}$. As a consequence the map $F'(0)$ is surjective.
\end{claim}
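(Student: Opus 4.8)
The plan is to read \eqref{e:4.7} as a fibrewise linear equation on the symplectic vector bundle $(\xi|_L, d\alpha|_{\xi|_L})$, solve it by inverting $d\alpha$, and then feed the resulting section into the Hodge normal form of the target so that surjectivity of $F'(0)$ can be read off from \eqref{e:4.5}.

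First I would fix the $\hat g$-orthogonal splitting $\xi|_L = TL \oplus_{\hat g}(\xi \cap \widehat{N}(L))$, in which $TL$ is a Lagrangian subbundle of $(\xi|_L, d\alpha)$. Viewing $\omega$ as a section of ${\rm Hom}(TL, \mathbb{R})$, I extend it to a section $\hat\omega$ of ${\rm Hom}(\xi|_L, \mathbb{R})$ by making it annihilate $\xi \cap \widehat{N}(L)$. Nondegeneracy of $d\alpha$ on $\xi$ says that $A \mapsto \iota_A d\alpha$ is a bundle isomorphism $\xi|_L \to {\rm Hom}(\xi|_L, \mathbb{R})$, so there is a unique $Z \in \Gamma(\xi|_L)$ with $\iota_Z d\alpha = \hat\omega$ on $\xi$; since its inverse is a purely algebraic (pointwise linear) operation, $Z$ inherits the class $C^{1,a}$ of $\hat\omega$, which in turn comes from $\omega$.

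Next I split $Z = Z_1 + Z_2$ along the above decomposition and discard the tangential part: because $TL$ is Lagrangian, $(j^*\iota_{Z_1}d\alpha)(u) = d\alpha(Z_1, u) = 0$ for every $u \in TL$, whence $j^*(\iota_{Z_2}d\alpha) = j^*(\iota_Z d\alpha) = \omega$ (the restriction of $\hat\omega$ to $TL$ giving back $\omega$). Thus $Z := Z_2 \in \Gamma(\xi \cap \widehat{N}(L))$ solves \eqref{e:4.7} and is $C^{1,a}$. Uniqueness is the same Lagrangian fact: the difference $W$ of two solutions in $\xi \cap \widehat{N}(L)$ satisfies $d\alpha(W, u) = 0$ for all $u \in TL$, so $W$ lies in the $d\alpha$-orthogonal of the maximal isotropic $TL$, namely $TL$ itself, and since $\xi \cap \widehat{N}(L)$ meets $TL$ only in the zero section, $W = 0$.

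For the surjectivity I take an arbitrary $(\eta, d\zeta)$ in the codomain and use the Hodge--Morrey--Friedrichs decomposition recorded above to write it in the normal form $(d\chi + \omega, -d\star\omega)$ with $\chi \in C^{2,a}\Omega^0_{\bf D}(L)$ and $\omega \in C^{1,a}\Omega^1(L)$; setting $f = \chi$, building $Z$ for this $\omega$, and putting $V = fR_\alpha + Z$, the formula \eqref{e:4.5} gives $F'(0)(V) = (\eta, d\zeta)$. I expect the genuine obstacle to be the verification that $V$ really lies in $\mathcal{X}$. That $V$ is $\hat g$-normal is routine, since $R_\alpha \perp_{\hat g} TL$ and $Z \in \xi \cap \widehat{N}(L)$. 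The delicate point is the Neumann boundary condition $d\alpha(N, V)|_{\partial L} = 0$: using $\iota_{R_\alpha}d\alpha = 0$ and $\chi|_{\partial L} = 0$ it reduces to $\omega(N)|_{\partial L} = 0$, i.e. $\mathbf{n}\omega = 0$, so one must ensure the decomposition is taken with boundary data for which the normal component of $\omega$ vanishes on $\partial L$ (the pieces $\delta v$ and a Neumann harmonic $h(\eta)$ already have vanishing $\mathbf{n}$, and the exact piece $du$ must be shown to contribute none). Pinning down this boundary bookkeeping, together with upgrading the $C^{1,a}$ section $V$ to the $C^{2,a}$ regularity demanded by $\mathcal{X}$ by an elliptic bootstrap, is where the real work lies.
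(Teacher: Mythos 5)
Your proposal reproduces the paper's own proof essentially step for step: the same $\hat g$-orthogonal splitting $\xi|_L=TL\oplus_{\hat g}(\xi\cap\widehat{N}(L))$ with $TL$ Lagrangian, the same extension of $\omega$ to $\hat\omega$ by annihilating the normal summand, the same pointwise inversion of $d\alpha$ followed by discarding the tangential component $Z_1$, and the same Hodge decomposition $(\eta,d\zeta)=(d\chi+\omega,-d\star\omega)$ to conclude surjectivity of $F'(0)$ from (\ref{e:4.5}); the explicit uniqueness argument via maximal isotropy of $TL$ is a detail the paper leaves implicit but is exactly the intended one. The two issues you flag at the end --- verifying the Neumann condition $d\alpha(N,V)|_{\partial L}=0$ (equivalently $\omega(N)|_{\partial L}=0$) for the constructed $V$, and the $C^{1,a}$ versus $C^{2,a}$ regularity demanded by $\mathcal{X}$ --- are genuine, but the paper itself passes over both in silence, so your account is, if anything, more careful than the original.
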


\noindent{\bf 4.4. Computing  $\ker(F'(0))$}.
As above let ${\mathcal X}$ be one of ${\mathcal X}_i$,
$i=1,2,3,4$. Let $V\in{\mathcal X}$ sit in
$\ker(F'(0))$. As above we may write $V=fR_\alpha+ Y$, where
$f=\alpha(V)\in C^{2,a}(L)$ and $Y\in C^{2,a}(\Gamma(\xi|_L))$.   (\ref{e:4.5})
yields
\begin{eqnarray}
&&df+ j^\ast(\iota_{Y}d\alpha)=0,\label{e:4.8}\\
 &&-d\star j^\ast(\iota_{Y}d\alpha)=0.\label{e:4.9}
\end{eqnarray}
From (\ref{e:4.8}) we get
\begin{eqnarray*}
0=\delta(df+ j^\ast(\iota_{Y}d\alpha))&=&\delta df+
\delta(j^\ast(\iota_{Y}d\alpha))\\
&=&\delta df+ (-1)^{2n+1}\star d\star(j^\ast(\iota_{Y}d\alpha))
=\delta df
\end{eqnarray*}
because of (\ref{e:4.9}). Hence $\triangle f=0$, i.e., $f$ is a harmonic function.

Note that we have a symplectic orthogonal decomposition $\xi|_L=TL\oplus (JTL)$ with
respect to $d\alpha|_\xi$.  $Y$ in (\ref{e:4.8}) has a unique
decomposition $Y=Z_1+ JZ_2$, where $Z_1$ and $Z_2$ are vector fields on $L$.
Since $j^\ast(\iota_{Z_1}d\alpha)=0$, (\ref{e:4.8}) and (\ref{e:4.9}) become
\begin{eqnarray}\label{e:4.10}
df+ j^\ast(\iota_{JZ_2}d\alpha)=0\quad\hbox{and}\quad d\star j^\ast(\iota_{JZ_2}d\alpha)=0,
\end{eqnarray}
respectively. The first equation shows that
 $Z_2$ is uniquely determined by $df$ because the map $\Theta:TL\to T^\ast L$
defined by $\Theta(u)=j^\ast(\iota_{Ju}d\alpha)$ is an isomorphism.
Write this $Z_2$ as $Z_2(df)$. It is linear in $df$.
 Denote by $\hat{P}:T_LM\to\hat{N}(L)$  the (fibrewise) orthogonal projection with respect to the metric $\hat{g}$.
Then $V=\hat{P}V=f\hat{P}R_\alpha+ \hat{P}(JZ_2)$, and thus $Z_1=(id-\hat{P})(fR_\alpha+ JZ_2)$.

For conveniences we write $F_j$ as the restriction of $F$ on ${\mathcal{X}}_j$, $j=1,2,3,4$.

\noindent{\bf 4.7. Proof of (i)}.
 Decompose $V\in\ker(F'_1(0))$ into $V=fR_\alpha+ Z_1+ JZ_2$ as above,
where $Z_i\in\Gamma(TL)$, $i=1,2$. (\ref{e:4.10}) shows
$j^\ast(\iota_{JZ_2}d\alpha)\in\mathcal{H}C^{2,a}\Omega^1(L)$.
Moreover the boundary condition $d\alpha(N,
V|_{\partial L})=0$ implies $d\alpha(N,
JZ_2|_{\partial L})=0$, i.e., $\iota_{N(x)}j^\ast(\iota_{JZ_2}d\alpha)=0\;\forall x\in{\partial L}$.
This means that the $1$-form $j^\ast(\iota_{JZ_2}d\alpha)$ is tangent to the boundary
with respect to the metric $g|_L$.
By the definition we have ${\bf n}j^\ast(\iota_{JZ_2}d\alpha)=0$
  and
so $\Theta(Z_2)=j^\ast(\iota_{JZ_2}d\alpha)\in\mathcal{H}C^{2,a}\Omega_{\bf N}^1(L)$.
Note that $\mathcal{H}C^{2,a}\Omega_{\bf N}^1(L)\cong H^1(L;\mathbb{R})$
by Hodge theorem (cf. \cite[Theorem~2.6.1]{Sch}). We obtain
$$
\dim\{Z_2\,|\, V=fR_\alpha+ Z_1+ JZ_2\in\ker(F'_1(0))\}\le
\dim H^1(L;\mathbb{R}).
$$
But the first equation in (\ref{e:4.10}) implies that $f$ can be determined by
$j^\ast(\iota_{JZ_2}d\alpha)$ (and so $Z_2$) up to a constant. And
$V=\hat{P}V=f\hat{P}R_\alpha+ \hat{P}(JZ_2)$. We deduce
 $\dim\ker(F'_1(0))\le\dim H^1(L;\mathbb{R})+1$. The first claim is proved.

To get the second claim obverse that we have an linear isomorphism
\begin{equation}\label{e:4.11}
\Xi:\Gamma(N(L))\to \Gamma(\widehat{N}(L)),\;V\mapsto V^v,
\end{equation}
given by the decomposition  $V=V^v+V^h$, where $V^v\in\Gamma(\hat{N}(L))$ and $V^h\in\Gamma(TL)$.
It suffices to prove that $\Xi$ maps $\Gamma(N(L))_W$ into $\ker(F'_1(0))$.
Let $V\in\Gamma(N(L))_W$. Then $d\alpha(N, V|_{\partial L})=0$  by Claim~2.6.
It is easy to see that this implies $d\alpha(N, \Xi(V)|_{\partial L})=0$.
By the assumption there exists a small deformation of $L$ through special Lagrangian submanifolds
with boundary confined in $W$, $j_t:L\to M$, where $|t|\ll 1$, such that $V(x)=\frac{d}{dt}|_{t=0}j_t(x)$
for any $x\in L$. Write $V=\Xi(V)+V^h=fR_\alpha+Y$, where $V^h\in\Gamma(TL)$,
$f=\alpha(V)$ and $Y\in\Gamma(\xi|_L)$. Since
$j^{*}_{t}\alpha=0$ and $j^{*}_{t}{\rm Im}\epsilon=0$, as in (\ref{e:4.5}) we obtain
(\ref{e:4.8}) and (\ref{e:4.9}).   Set $Z:=Y-V^h$. It belongs to $\Gamma(\xi|_L)$ since $TL\subset\xi|_L$,
and $\Xi(V)=fR_\alpha+ Z$.  (\ref{e:4.8}) and (\ref{e:4.9}) imply
\begin{eqnarray}\label{e:4.12}
df+ j^\ast(\iota_{Z}d\alpha)=0\quad\hbox{and}\quad
 -d\star j^\ast(\iota_{Z}d\alpha)=0.
\end{eqnarray}
This means that $\Xi(V)\in \ker(F'_1(0))$.

\noindent{\bf 4.6. Proof of (ii)}.  {\it Proof of statement 1}. Let $V\in\ker(F'_2(0))$. Following
the notations in the proof of (i) we have proved
$\Theta(Z_2)=j^\ast(\iota_{JZ_2}d\alpha)\in\mathcal{H}C^{2,a}\Omega_{\bf N}^1(L)$.
Now $df|_{\partial L}=0$ implies
$$
{\bf t}j^\ast(\iota_{JZ_2}d\alpha)=j^\ast(\iota_{JZ_2}d\alpha)|_{\partial L}=-df|_{\partial L}=0
$$
by the first equation in (\ref{e:4.10}). Hence $\Theta(Z_2)=j^\ast(\iota_{JZ_2}d\alpha)\in\mathcal{H}C^{2,a}\Omega_{\bf D}^1(L)$.
By the strong unique continuation theorem of Aronszajn, Krzywicki and Szarski (cf.\cite[Theorem~3.4.4]{Sch})
we have $\mathcal{H}C^{2,a}\Omega_{\bf N}^1(L)\cap\mathcal{H}C^{2,a}\Omega_{\bf D}^1(L)=\{0\}$.
Then $Z_2=0$ and so $V=\hat{P}V=f\hat{P}R_\alpha+ \hat{P}(JZ_2)=f\hat{P}R_\alpha$.
This shows
\begin{eqnarray}\label{e:4.13}
\ker(F'_2(0))\subset \{f \hat{P}R_\alpha|_L\;|\;f\in C^\infty(L),\;\triangle f=0,\; df|_{\partial L}=0\}
\end{eqnarray}
(For simplicity we write $R_\alpha|_L$
 as $R_\alpha$ without confusion occurring below.) Since $\alpha(V)=\alpha(\Xi(V))$
 and  $\Xi\bigl(\Gamma(N(L))_W\bigr)\subset\ker(F'_1(0))$ we deduce
 $$
 \Xi\bigl(\{V\in\Gamma(N(L))_W\,|\,d\bigl(\alpha({V})|_{\partial L}\bigr)=0\}\bigr)\subset
 \ker(F'_2(0)).
 $$
  But $\{f\in C^\infty(L)\,|\,\triangle f=0,\; df|_{\partial L}=0\}\cong\mathbb{R}^l$
by  \cite[Theorem~3.4.6]{Sch}. These lead to
 $$
 \dim\{V\in\Gamma(N(L))_W\,|\,d\bigl(\alpha({V})|_{\partial L}\bigr)=0\}\le\dim\ker(F'_2(0))\le l.
 $$

 If $\partial L$ is connected, we have
$$
\{V\in\Gamma(N(L))_W\,|\,d\bigl(\alpha({V})|_{\partial L}\bigr)=0\}=
\{V\in\Gamma(N(L))_W\,|\,\alpha({V})|_{\partial L}={\rm constant}\}.
$$
 This case can be included in the proof of the following second statement.


 {\it Proof of statement 2}.
 We claim
\begin{eqnarray}\label{e:4.14}
\ker(F'_3(0))=\{c\hat{P}R_\alpha|_L\;|\, c\in\mathbb{R}\},
\end{eqnarray}
 In fact, as before we can write $V\in \ker(F'(0))$
as $V=fR_\alpha+ Z_1+ JZ_2$ for unique $Z_i\in\Gamma(TL)$, $i=1,2$. Since $f=\alpha(V)$ sattisfies
$\triangle f=0$ and $f|_{\partial L}$ is constant,  so is $f$  by the maximum principle. As above we get $Z_2=0$.
Then $V=fR_\alpha+Z_1$ and hence $V=\hat{P}(fR_\alpha)=f\hat{P}R_\alpha$.
This shows $\ker(F'_3(0))\subset\{c\hat{P}R_\alpha|_L\;|\, c\in\mathbb{R}\}$.

Note that $\dim\{c\hat{P}R_\alpha|_L\;|\, c\in\mathbb{R}\}=1$ is one-dimensional since $\hat{P}R_\alpha(p)=R_\alpha(p)\ne 0$ at each $p\in\partial L$ by Remark~\ref{rm:3.5}.
Moreover $\hat{P}R_\alpha|_L=R_\alpha|_L+ Z$ for some $Z\in\Gamma(TL)$.
It is easy to check that $\hat{P}R_\alpha|_L\in{\mathcal X}_3$ and $F'(0)(\hat{P}R_\alpha|_L)=0$ by
(\ref{e:4.12}). (\ref{e:4.14}) follows immediately.

It remains to prove
\begin{equation}\label{e:4.15}
\Xi\bigl(\{V\in\Gamma(N(L))_W\,|\,\alpha({V})|_{\partial L}={\rm constant}\}\bigr)=\ker(F'_3(0)).
\end{equation}
Since $R_\alpha|_L\in\{V\in\Gamma(N(L))_W\,|\,\alpha({V})|_{\partial L}={\rm constant}\}$
and $\Xi(R_\alpha|_L)=\hat{P}R_\alpha|_L$
we derive from (\ref{e:4.14}) that the right side in (\ref{e:4.15}) is contained in the left one.
 To prove the converse inclusion relation, note that
 every $V\in\Gamma(N(L))_W$ satisfies $d\alpha(N, V|_{\partial L})=0$  by Claim~2.6.
Moreover, as in the proof of (i) we can write $V\in\Gamma(N(L))_W$ as $V=fR_\alpha+Y$, where $f=\alpha(V)$ and $Y\in\Gamma(\xi|_L)$ must satisfy (\ref{e:4.8}) and (\ref{e:4.9}). Then $\Xi(V)=fR_\alpha+ Z$ with $Z=Y-V^h$ satisfies (\ref{e:4.12}), $d\alpha(N,\Xi(V)|_{\partial L})=0$ and $\alpha(V)=\alpha(\Xi(V))$.
These show $\Xi(V)\in\ker(F'_3(0))$, and so the desired inclusion.



\noindent{\bf 4.5. Proof of (iii)}. By a contradiction we assume that
there exists a small deformation of $L$ through special Lagrangian submanifolds
with boundary confined in $W$, $f_t:L\to M$, where $|t|\ll 1$, such that $V(x)=\frac{d}{dt}|_{t=0}f_t(x)$
for any $x\in L$.
Decompose $V$ into $V^v+V^h$, where $V^v\in\Gamma(\hat{N}(L))$ and $V^h\in\Gamma(TL)$.
Then $\alpha(V)=\alpha(V^v)$ and $V^v\ne 0$.
Let $V=fR_\alpha+Y$, where $f=\alpha(V)$ and $Y\in\Gamma(\xi|_L)$. As above $f$ and $Y$ satisfy
(\ref{e:4.8}) and (\ref{e:4.9}). Set $Z:=Y-V^h$. It belongs to $\Gamma(\xi|_L)$ since $TL\subset\xi|_L$,
and $V^v=fR_\alpha+ Z$. It follows from (\ref{e:4.8}) and (\ref{e:4.9}) that
$f$ and $Z$ satisfy (\ref{e:4.12}).
This means that $V^v$ belongs to $\ker(F'_4(0))$.
Since $\triangle f=0$ and $f|_{\partial L}=0$ we get $f=0$ and so $V^v=Z=Z_1+ JZ_2$,
where $Z_i\in\Gamma(TL)$, $i=1,2$.  From the first equation in (\ref{e:4.12}) we deduce that
$j^\ast(\iota_{JZ_2}d\alpha)=0$ and hence $Z_2=0$. Then $V^v=Z_1\in\Gamma(TL)$, which
contradicts to $V^v\in\Gamma(\hat{N}(L))\setminus\{0\}$.

Theorem~\ref{th:1.1} is proved. \qed


\newpage

\section{The proof of Theorem~\ref{th:1.2}}

Let $\langle R_{\alpha}\rangle$ denote the real line bundle
generated by $R_\alpha|_L$. Then the normal bundle of $L$ with
respect to the metric $g$, $N(L)$, is equal to $\langle
R_{\alpha}\rangle\oplus_{g} JTL$. For a small section $V:L\to N(L)$,
the exponent map of $g$ yields a map
$$
\exp_V:L\to M,\;x\mapsto \exp_x(V(x)).
$$
Thus there exists a  neighborhood  $\mathscr{V}$ of $0$ in
\begin{eqnarray*}
{\mathcal{Y}}:=\bigl\{{V}\in C^{2,a}(\Gamma(\langle
R_{\alpha}\rangle))\oplus C^{1,a}(\Gamma(JTL))\,:
\;\alpha(V)|_{\partial L}={\rm const}\bigr\}
\end{eqnarray*}
so that the following map is well-defined:
\begin{equation}\label{e:5.1}
\begin{split}
G:\,&\mathscr{V}\to C^{1,a}\Omega^1(L)\oplus C^{0,a}\Omega^{n}(L),\\
   &V\mapsto\bigl(\exp^{*}_{V}\alpha,2\exp^{*}_{V}{\rm Im}\epsilon\bigr).
\end{split}
\end{equation}
It is $C^1$ (\cite{ToVe}), and ${\rm Im}(G)\subseteq
C^{1,a}\Omega^1(L)\oplus dC^{1,a}\Omega^{n-1}(L)$ as above since
$\exp_V$ is homotopic to the inclusion $j:L\hookrightarrow M$ via
$\exp_{tV}$.

 Considering $G$ as a map to $C^{1,a}\Omega^1(L)\oplus
dC^{1,a}\Omega^{n-1}(L)$, and writing $V=JX+ fR_\alpha$, we may get
\begin{equation}\label{e:5.2}
   \begin{split}
     G'(0)(V)=&\frac{d}{dt}(\exp^{*}_{tV}\alpha,2\exp^{*}_{tV}{\rm Im}\epsilon)|_{t=0}\\
        =&(\mathcal{L}_{V}\alpha,2\mathcal{L}_{V}{\rm Im}\epsilon)|_{L}\\
                =&(df+\iota_{JX}d\alpha,-d*\iota_{JX}d\alpha)|_{L}
     \end{split}
\end{equation}
as above. Moreover,  each $(\eta,d\zeta)\in
C^{1,a}\Omega^1(L)\oplus dC^{1,a}\Omega^{n-1}(L)$ may be
written as $(\eta,d\zeta)=(d\chi + \omega,-d\star\omega)$, where
$\chi$ and $\omega$ are as in (\ref{e:4.6}). Take $f=\chi$, and one
easily find $X\in C^{1,a}(\Gamma(TL))$ such that
$j^*(\iota_{JX}d\alpha)=\omega$. Clearly, such a $V=fR_\alpha+ JX$
satisfies $\alpha(V)|_{\partial L}=0$.  Hence $G'(0)$ is surjective.

Assume that $V=fR_\alpha+ JX$ sits in $\ker(G'(0))$. Then $f$ and
$JX$ satisfy
$$
df+ j^\ast(\iota_{JX}d\alpha)=0,\qquad -d\star
j^\ast(\iota_{JX}d\alpha)=0.
$$
It follows that $\triangle f=\delta df=0$. Recall that $f=\alpha(V)$
is equal to a constant $c$ on $\partial L$. By the maximum principle
we get $f\equiv c$, and hence
$$
j^\ast(\iota_{JX}d\alpha)=0.
$$
From this we derive $JX=0$ as above. This prove
$\ker(G'(0))=\{cR_\alpha\,|\, c\in\mathbb{R}\}$. Hence $(0,0)$ is a
regular value of the restriction of $G$ to a small neighborhood
${\mathscr V}_0$ of $0\in {\mathscr V}$, and thus
 the moduli space $\mathfrak{M}(L)$ is a 1-dimensional smooth manifold
by the implicit function theorem.\qed\vspace{2mm}

Since $ \iota_{R_\alpha}\epsilon=0$ and
$\mathcal{L}_{R_\alpha}\epsilon=0$ we have $\psi_t({\rm
Im}\epsilon)={\rm Im}\epsilon\;\forall t$, where $\psi_t$ is the
flow of $R_\alpha$. For special Legendrian embedding (submanifold)
$p:L\to M$ we obtain $p_t^{*}\alpha=0$ and $p_t^{*}{\rm
Im}\epsilon=0$  with $p_t=\psi_t\circ p$ for any $t$. So
 the deformation in Theorem~\ref{th:1.2} is actually given
by the isometries generated by the Reeb vector field. \vspace{2mm}

\noindent{\rm Rremark~5.1.}\quad If we replace ${\mathscr V}$ by
 a  neighborhood  ${\mathscr W}$ of $0$ in
$$
C^{2,a}(\Gamma(\langle R_{\alpha}\rangle))\oplus
C^{1,a}(\Gamma(JTL)),
$$
then  the  map
$$
\widehat G:\,\mathscr{W}\to C^{1,a}(\Lambda^1(L))\oplus
C^{0,a}(\Lambda^{n}(L)),\quad
  V\mapsto\bigl(\exp^{*}_{V}\alpha,2\exp^{*}_{V}{\rm Im}\epsilon\bigr).
$$
 is still $C^1$ and has the image $
{\rm Im}(\widehat G)\subseteq C^{1,a}(\Lambda^1(L))\oplus
dC^{1,a}(\Lambda^{n-1}(L))$. From the above proof it is easy to see
that $\widehat G'(0)$ is surjective. If $V=fR_\alpha+ JX$ belongs to
$\ker(\widehat G'(0))$, we have $\triangle f=0$ as above. But
$\partial L$ is a nonempty closed manifold, by Theorem 3.4.6 of
\cite{Sch} each $b\in C^\infty(\partial L)$ corresponds to a unique
$f\in C^\infty(L)$ satisfying $\triangle f=0$ and $f|_{\partial
L}=b$. It follows that $\ker(\widehat G'(0))$
 must be of infinite dimension.\vspace{2mm}

The corresponding
problems with \cite[Cor.9]{Bu} and \cite[Th.4.8]{ToVe} can also be
considered similarly.

\vspace{2mm}

\noindent{\rm ACKNOWLEDGEMENTS}. The authors are deeply grateful to
 the anonymous referee for some interesting questions,  numerous comments
 and improved suggestions. We would like to thank Dr. Georgios Dimitroglou Rizell
for carefully checking and valuable suggestions on this corrected version.



\bibliographystyle{amsnumber}

\end{document}